\newtheorem{rema}{Remark}
\newtheorem{lemm}{Lemma}
\newtheorem{theo}{Theorem}
\newtheorem{coro}{Corollary}
\newcommand{\C}[1][]{\ensuremath{{\mathbb{C}^{#1}} }}
\newcommand{\R}[1][]{\ensuremath{{\mathbb{R}^{#1}} }}
\renewcommand{\S}[1][]{\ensuremath{{\mathbb{S}^{#1}} }}
\renewcommand{\H}[1][]{\ensuremath{{\mathbb{H}^{#1}} }}
\newcommand{\T}[1][]{\ensuremath{{\mathbb{T}^{#1}} }}
\newcommand{\M}{{\cal M}}
\newcommand{\s}{{\cal S}}
\newcommand{\<}{\langle}
\renewcommand{\>}{\rangle}
\newcommand{\pa}{\partial}
\newcommand{\al}{\alpha}
\newcommand{\eps}{\epsilon}
\newcommand{\ka}{\kappa}
\newcommand{\la}{\lambda}
\newcommand{\si}{\sigma}
\renewcommand{\ell}{\texttt{L}}
\date{}
\title{Marginally trapped submanifolds in space forms with arbitrary signature}
\author{ Henri Anciaux\footnote{University of S\~ao Paulo; supported by CNPq (PQ 306154/2011-0) and Fapesp (2011/21362-2)}
 \\ \em {\small In memory of Franki Dillen } \em}
\begin{document}

\maketitle

\centerline{\textbf {\large{Abstract}}}

\bigskip

{\small We give explicit representation formulas for marginally trapped submanifolds of co-dimension two 
in  pseudo-Riemannian spaces with arbitrary signature and constant sectional curvature.


\bigskip

\centerline{\small \em 2010 MSC: 53A10, 53C42
\em }


\section*{Introduction}
 Let $({\cal N},g)$ be a pseudo-Riemannian manifold and $\s$ a submanifold of $({\cal N},g)$ with non-degenerate induced metric. We shall say that $\s$ is \em marginally trapped \em if its mean curvature vector is null, i.e.\ $g(\vec{H},\vec{H})$ vanishes.
When $({\cal N},g)$ is a Lorentzian four-manifold and $\s$ is spacelike, the marginally trapped condition has an interpretation in terms of general relativity: it describes the horizon of a black hole (\cite{Pe},\cite{CGP}). The equation $g(\vec{H},\vec{H})=0$  is, nevertheless, interesting in whole generality from the geometric view point, being actually the simplest curvature equation which is purely pseudo-Riemannian: in the Riemannian case this equation implies minimality.

\medskip

In \cite{AG},  marginally trapped submanifolds with co-dimension two have been locally characterized in several simple Lorentzian spaces: the Minkowski space $\R^{n+2}$, the  Lorentzian space forms $d\S^{n+2}$ and $Ad\S^{n+2},$ and the Lorentzian products $\S^{n+1} \times \R$ and $\H^{n+1} \times \R$.
Little has been done about marginally trapped surfaces in the case of a manifold with a non Lorentzian metric. In \cite{Ch}, flat marginally trapped surfaces of $\R^4$ endowed with the neutral metric $dx_1^2+dx_2^2-dx_3^2-dx_4^2$ have been studied, while  Lagrangian marginally trapped surfaces of complex space forms of complex dimension two were characterized in \cite{CD}. Recently,  marginally trapped surfaces of certain spaces of oriented geodesics have been investigated (\cite{GG}).

\medskip

  The purpose of the present paper is to extend the results of \cite{AG} to the case of co-dimension two submanifolds in  constant curvature spaces with arbitrary signature, i.e.\  {\it (i)} in the pseudo-Euclidean space $\R^{n+2}_{p+1}$ equipped with the inner product of signature $(p+1,q+1)$, and \textit{(ii)} in the space form  $\S_{p+1}^{n+2}$ of  signature $(p+1,q+1)$ and sectional curvature $1$ (see next Section for more precise definition and notation).
As in \cite{AG}, we rely on the use of the contact structure of  the 
set of null geodesics of the ambient space. The congruence of null lines which are normal to a submanifold of co-dimension two is a Legendrian submanifold with respect to this contact structure. Conversely, given a nulll line congruence ${\cal L}$ which is Legendrian, there exists an infinite-dimensional family of submanifolds, parametrized by the set of real maps $\tau \in C^2({\cal L})$,  such that the congruence is normal to them. In order to obtain our characterization results, we prove that, given a Legendrian, null line congruence ${\cal L}$, the submanifold parametrized by $\tau$  is marginally trapped if and only if the real map $\tau \in C^2({\cal L})$ is a root of certain polynomial map with coefficients valued in  $C^2({\cal L}).$

\medskip

The paper is organized as follows:  Section \ref{statements} introduces some notation and gives the precise statements of the results; Section \ref{s2} 
gives a characterization of
those submanifolds whose second fundamental tensor is null (Theorem \ref{null2ff}), while Section \ref{s3} provides a local representation formula which is similar to that of \cite{AG} (Theorem \ref{zero}). In Section \ref{s4}, an alternative, more global representation formula is given, under certain maximal rank assumption  (Theorems \ref{one} and \ref{S^{n+2}_{p+1}}). Finally, Section \ref{s5} attempts to enlighten the ideas of this paper by
 providing an interpretation of the general construction in terms of contact geometry, and explains also the relation between Theorems \ref{zero} and \ref{one} in the Lorentzian case.

\medskip

This paper is dedicated to the memory of Franki Dillen, 1963-2013.

\section{Statement of results} \label{statements}
Along the paper we fix three integer numbers  $p,q$ and $n$ such that $p+q = n \geq 1.$
We shall denote by $\R^{n+2}_{p+1}$ the $(n+2)$-dimensional real vector space equipped with the inner product of signature $(p+1,q+1)$  given by 
$$ \<.,.\> = \sum_{i=1}^{p+1}  dx_i^2 - \sum_{i=p+2}^{n+2} dx_i^2 .$$
A non-vanishing vector $v$ of $\R^{n+2}_{p+1}$ is said to be \em null \em if $\<v,v\>=0$. We furthermore introduce the hyperquadric
$$\S_{p+1}^{n+2} := \{ x \in \R^{n+3}_{p+2} |  \,  \<x, x\> =1 \}.$$
The induced metric of $\S_{p+1}^{n+2}$, still denoted by $\<.,.\>$, has  signature $(p+1,q+1)$ and constant sectional curvature  $1$. Conversely,
it is well known (see \cite{Kr}) that a simply connected $(n+2)$-dimensional manifold endowed with a pseudo-Riemannian metric with signature $(p+1,q+1)$ and constant sectional curvature, is, up to isometries and scaling, $\R^{n+2}_{p+1}$ or $\S_{p+1}^{n+2}.$ We shall call these spaces \em pseudo-Riemannian space forms. \em
 
\medskip

We shall be concerned with submanifolds $\Sigma$  of $\R^{n+2}_{p+1}$ and $\S^{n+2}_{p+1}$ with non-degenerate induced metric $g$ and whose normal bundle $N \Sigma$  $(i)$ is two-dimensional (so that $\Sigma$ has dimension $n$), and $(ii)$ has indefinite (Lorentzian) metric (so that the induced metric on $\Sigma$ has signature $(p,q)$).
We recall that the \em second fundamental form \em $h$ of $\Sigma$ is the symmetric tensor
$h :T\Sigma \times T\Sigma  \to N \Sigma$
defined by $h(X,Y):= (D_X Y)^{\perp}$, where $(.)^\perp$ denotes the projection onto the normal space $N\Sigma$ and $D$ is the Levi-Civita connection
of ambient space. If $\nu$ is a normal vector field along $\Sigma$, we define the \em shape operator \em of $\Sigma$ with respect to $\nu$ to be the endormorphism of $T\Sigma$ defined by $A_\nu X = - (D_X \nu)^{\top}$, where $(.)^\top$ denotes the projection onto  $T\Sigma.$
The  relation $\<h(X,Y), \nu\> =\<A_\nu X, Y\>$ shows that the second fundamental form and the shape operator carry the same information.

 The \em mean curvature vector \em $\vec{H}$
of the immersion is the trace of $h$ with respect to the induced metric of $\Sigma$ divided by $n$. Our first result is the characterization of $n$-dimensional submanifolds of space forms with \em  null second fundamental form, \em  i.e.\ such that $\forall X,Y \in T\Sigma,  \, h(X,Y)$ is null:

\begin{theo}   \label{null2ff}
Let $\nu$ be a constant, null vector of $\R^{n+2}_{p+1}$ and $\Sigma$ an $n$-dimensional submanifold with non-degenerate induced metric which is contained in the hyperplane $\nu^{\perp}$.
Then $\Sigma$  has null second fundamental form and is therefore marginally trapped. Moreover,  both the tangent and the normal bundles of $\Sigma$ are flat.

Analogously, let $\nu$ be a constant, null vector of $\R^{n+3}_{p+2}$ and $\Sigma$ an $n$-dimensional submanifold of $\S^{n+2}_{p+1}$ with non-degenerate induced metric which is contained in the hypersurface $ \nu^\perp \cap \S^{n+2}_{p+1}.$
Then $\Sigma$  has null second fundamental form and is therefore marginally trapped. Moreover, $\Sigma$ has constant scalar curvature and flat normal bundle.

Conversely, any  submanifold of $\R^{n+2}_{p+1}$ or $\S^{n+2}_{p+1}$ with null second fundamental form is locally described in this way.
\end{theo}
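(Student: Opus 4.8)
The plan is to prove the two affirmative assertions first (the second being the first transplanted one dimension up), and then the converse, which carries the real content.

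For the first statement, note that since $\Sigma\subset\nu^\perp$ and $\nu$ is constant, every tangent vector of $\Sigma$ is orthogonal to $\nu$, so $\nu$ is a null section of $N\Sigma$; moreover $A_\nu X=-(D_X\nu)^\top=0$ because $\nu$ is constant. Hence $\<h(X,Y),\nu\>=\<A_\nu X,Y\>=0$ for all $X,Y$, and since $N\Sigma$ is a Lorentzian plane in which the orthogonal complement of the null line $\R\nu$ is $\R\nu$ itself, $h$ is valued in $\R\nu$; in particular $h(X,Y)$ and $\vec H$ are null. The Gauss equation then forces $R\equiv0$ (each of its terms is a scalar product of vectors of the totally isotropic line $\R\nu$), and the Ricci equation, together with $A_\nu=0$ and the fact that the curvature operator $R^\perp(X,Y)$ of a Lorentzian plane bundle is determined by the single scalar $\<R^\perp(X,Y)\nu,\xi\>$ (for a companion null normal $\xi$ with $\<\nu,\xi\>=1$), gives $R^\perp\equiv0$.

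The second statement is obtained the same way inside $\R^{n+3}_{p+2}$, with $x$ the position field and $\bar D$ its flat connection: $\Sigma\subset\nu^\perp\cap\S^{n+2}_{p+1}$ forces $\nu\perp x$ and $\nu\perp T\Sigma$, so $\nu$ is a null section of the normal bundle of $\Sigma$ in $\S^{n+2}_{p+1}$, and the Levi-Civita connection $D'$ of $\S^{n+2}_{p+1}$ satisfies $D'_X\nu=\bar D_X\nu+\<X,\nu\>x=0$, whence $A_\nu=0$ and, as before, $h$ is $\R\nu$-valued and null. Since the ambient now has constant sectional curvature $1$, the Gauss equation shows that $\Sigma$ carries the curvature tensor of a space of constant curvature $1$, in particular constant scalar curvature, while the Ricci equation still yields a flat normal bundle because the curvature tensor of $\S^{n+2}_{p+1}$, evaluated on vectors tangent to $\Sigma$ and paired against normal ones, vanishes.

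For the converse, the first step is algebraic: a symmetric $N_x\Sigma$-valued bilinear form all of whose values are null is valued in a single null line of $N_x\Sigma$. Indeed, writing $h=\phi\,e_++\psi\,e_-$ in a null frame $(e_+,e_-)$ of $N_x\Sigma$, nullity of $h(X,X)$ reads $\phi(X,X)\psi(X,X)=0$ as a polynomial identity on $T_x\Sigma$, and since $\R[T_x\Sigma]$ is an integral domain one of $\phi,\psi$ vanishes identically, hence (char $\neq2$) one of $\phi,\psi$ vanishes. Thus, locally, $h$ is valued in a smooth null line sub-bundle $\ell$ of $N\Sigma$ (where $h\equiv0$ either null line serves). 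Let $\nu$ be a local null section of $\ell$. Then $A_\nu=0$, so $D_X\nu$ is normal to $\Sigma$, and since $\<\nu,\nu\>=0$ one has $D^\perp_X\nu=\sigma(X)\nu$ for a $1$-form $\sigma$. The Ricci equation, with $A_\nu=0$ and the ambient curvature contributing nothing in the direction $\nu$, gives $R^\perp(\cdot,\cdot)\nu=0$, i.e. $d\sigma=0$; hence locally $\sigma=d\lambda$, and replacing $\nu$ by $e^{-\lambda}\nu$ we may assume $D^\perp_X\nu=0$, whence $D_X\nu=0$ for every $X\in T\Sigma$. In the flat case this means $\nu$ equals a constant null vector $\nu_0$, and $\<x,\nu_0\>$ is constant along $\Sigma$, so $\Sigma$ lies in an affine null hyperplane, i.e. in $\nu_0^\perp$ up to a translation; in the sphere case $\nu$, being normal to $\Sigma$ inside $\S^{n+2}_{p+1}$, is automatically tangent to $\S^{n+2}_{p+1}$, so $\<x,\nu_0\>\equiv0$ and $\Sigma\subset\nu_0^\perp\cap\S^{n+2}_{p+1}$.

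I expect the main obstacle to be the converse: upgrading the pointwise splitting of $h$ through a null line to a genuine smooth line sub-bundle, and carrying out the Ricci-equation bookkeeping that makes $\sigma$ closed, so that $\nu$ admits a parallel rescaling. The affirmative statements, by contrast, reduce to the single implication $A_\nu=0\Rightarrow\mathrm{range}\,h\subset\R\nu$ together with the Gauss and Ricci equations.
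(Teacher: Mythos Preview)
Your proof is correct and follows the paper's overall strategy---$A_\nu=0$ forces $h\subset\R\nu$, then Gauss and Ricci give the curvature claims, and for the converse one shows $d\nu$ is proportional to $\nu$ and integrates---but differs in two implementation choices. First, you explicitly argue, via the integral-domain trick on quadratic forms, that a null second fundamental form takes values in a single null line at each point; the paper instead folds this into its Lemma~1, which characterizes ``$h$ collinear to $\nu$'' through the rank of $d\nu$. Second, in the converse you invoke the Ricci equation to prove $d\sigma=0$; this is correct but unnecessary, since once $A_\nu=0$ and $D_X^\perp\nu=\sigma(X)\nu$ you already have $\bar D_X\nu=\sigma(X)\nu$ for the flat ambient connection (in the sphere case $\bar D_X\nu=D'_X\nu$ because $\langle X,\nu\rangle=0$), and any nonvanishing component $\nu_i$ yields $\sigma=d\log|\nu_i|$ directly. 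The paper takes this shortcut, writing $\nu=e^\lambda\nu_0$ immediately without passing through Ricci; your route is slightly longer but equally valid.
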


 Quite surprisingly, the method introduced in \cite{AG} in the Lorentzian case can be used here, in the case
of marginally trapped submanifolds whose second fundamental form is not null, providing local parametrizations:

\begin{theo} \label{zero} 
 Let $\si$ be an immersion of class $C^4$ of an $n$-dimensional manifold $\M$ into  $\R^{n+1}_{p+1}$ (resp.\ $\S^{n+1}_{p+1}$) whose induced metric is non-degenerate and has signature $(p,q)$. Denote by $\nu$ the Gauss map of $\si$, which is therefore $\S^{n}_p$-valued (resp.\ $\S^{n+1}_{p+1}$-valued) ,  by $A=-d\nu$ the corresponding shape operator, and by $\tau_i$  the  roots of the polynomial of degree $n-1$
 $$ P(\tau):=tr (( Id -\tau A)^{-1}) .$$
Then the   immersions 
$\varphi_i : \M \to \R^{n+2}_{p+1} = \R^{n+1}_{p+1} \times \R$ (resp.\ $\S^{n+2}_{p+1} \subset \R^{n+2}_{p+2} \times \R$) defined by
$$\varphi_i= (\si + \tau_i  \nu ,\tau_i)$$  
are marginally trapped.

Conversely, any $n$-dimensional marginally trapped submanifold of  $\R^ {n+2}_{p+1}$ (resp.\ $\S^{n+1}_{p+1}$) whose second fundamental form is not null  is locally congruent to the image of such an immersion.

\end{theo}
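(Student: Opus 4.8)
The plan is to compute directly the mean curvature vector of the immersion $\varphi_i=(\sigma+\tau_i\nu,\tau_i)$ and show that the defining equation $P(\tau_i)=0$ is exactly the condition that makes it null. First I would set up a local frame: if $(e_1,\dots,e_n)$ is a local frame on $\M$ diagonalizing $A$ (possible at points where $A$ is diagonalizable; in general one works with the formal identity $\operatorname{tr}((\mathrm{Id}-\tau A)^{-1})$ as a rational function in $\tau$), write $d\sigma(e_j)=\sigma_j$, and record the structure equations $d\nu(e_j)=-A\,e_j$, together with $\langle\sigma_j,\nu\rangle=0$, $\langle\sigma_j,\sigma_k\rangle=g_{jk}$. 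Differentiating $\varphi_i$, the tangent space of the image is spanned by the vectors $d\varphi_i(e_j)=(\sigma_j+\tau_i\,d\nu(e_j)+ (d\tau_i(e_j))\nu,\ d\tau_i(e_j))=((\mathrm{Id}-\tau_iA)\sigma_j+(d\tau_i(e_j))\nu,\ d\tau_i(e_j))$. One then identifies a null normal direction: in $\R^{n+2}_{p+1}=\R^{n+1}_{p+1}\times\R$ the vector $\xi:=(\nu,1)$ is null and orthogonal to every $d\varphi_i(e_j)$ since $\langle\nu,(\mathrm{Id}-\tau_iA)\sigma_j\rangle=0$ and the last components cancel; a second null normal $\eta$ transverse to $\xi$ can be written down explicitly (something like $\eta=\tfrac12(\nu,-1)$ up to normalization so that $\langle\xi,\eta\rangle=-1$), giving a null frame of the Lorentzian normal bundle.

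Next I would compute the second fundamental form in the $\xi$-direction only, since the $\eta$-component of $\vec H$ is irrelevant for $\langle\vec H,\vec H\rangle$: because the normal bundle is Lorentzian with null frame $\xi,\eta$ (say $\langle\xi,\eta\rangle=-1$, $\langle\xi,\xi\rangle=\langle\eta,\eta\rangle=0$), the mean curvature vector has the form $\vec H=\alpha\xi+\beta\eta$ and $\langle\vec H,\vec H\rangle=-2\alpha\beta$, so $\varphi_i$ is marginally trapped precisely when $\alpha=0$ or $\beta=0$. The key computation is the shape operator $A_\xi$: from $A_\xi X=-(D_X\xi)^\top$ and $D_X\xi=(d\nu(X),0)=(-AX,0)$ one gets, after projecting onto $T\varphi_i$ using the frame above, that $A_\xi$ is (conjugate to) $A(\mathrm{Id}-\tau_iA)^{-1}$ acting on $T\M$. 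Hence the relevant component of $\vec H$ is proportional to $\operatorname{tr}\big(A(\mathrm{Id}-\tau_iA)^{-1}\big)$. A one-line algebraic identity, $A(\mathrm{Id}-\tau A)^{-1}=\tfrac1\tau\big((\mathrm{Id}-\tau A)^{-1}-\mathrm{Id}\big)$, shows $\operatorname{tr}\big(A(\mathrm{Id}-\tau A)^{-1}\big)=\tfrac1\tau\big(P(\tau)-n\big)$; comparing with the claim I expect the correct normalization to make the $\xi$-component of $\vec H$ vanish exactly when $P(\tau_i)=\operatorname{tr}((\mathrm{Id}-\tau_iA)^{-1})$ has the prescribed value — I would recheck the precise polynomial (the statement says $P$ has degree $n-1$, consistent with $\det(\mathrm{Id}-\tau A)\,P(\tau)$ being a polynomial of degree $n-1$ whose roots are the desired $\tau_i$), so the vanishing of $P(\tau_i)$ is the marginally trapped condition. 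The $\S^{n+2}_{p+1}$ case is handled identically after replacing $\R^{n+1}_{p+1}$ by $\S^{n+1}_{p+1}\subset\R^{n+2}_{p+2}$ and noting that the ambient position vector contributes an extra term to $D_XY$ which lies tangent to $\S^{n+2}_{p+1}$ and is orthogonal to $\xi$, hence does not affect $A_\xi$.

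For the converse, I would start from an arbitrary $n$-dimensional marginally trapped $\Sigma$ with non-null second fundamental form. Since $\langle\vec H,\vec H\rangle=0$ and $\vec H\neq0$ (the non-null hypothesis rules out $\vec H=0$ — or rather, I would argue that if $h$ is not null then $\vec H$ is a nonzero null vector, the case $h$ null being covered by Theorem~\ref{null2ff}), the line $\R\vec H$ defines a distinguished null direction in each normal plane; choosing a null normal field $\xi$ along $\R\vec H$, the congruence of null lines $x\mapsto x+\R\xi$ is the object to analyze. The plan is to show this null geodesic congruence is \emph{Legendrian} with respect to the natural contact structure on the space of null lines (this is where one invokes that $\xi$ is parallel in an appropriate sense, coming from the Codazzi equation together with $D_X\xi$ being normal — the standard fact recalled in the Introduction that normal null line congruences of codimension-two submanifolds are Legendrian), and then invoke the representation of such congruences: a Legendrian null line congruence is, locally, the congruence of normals to a hypersurface $\sigma$ in a totally geodesic $\R^{n+1}_{p+1}$ (resp.\ $\S^{n+1}_{p+1}$) slice, with $\nu$ its Gauss map and $\xi=(\nu,1)$. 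Writing $\Sigma$ as a graph $x=(\sigma+\tau\nu,\tau)$ over this data for some function $\tau$, the forward computation shows marginally trapped forces $P(\tau)=0$ pointwise, i.e.\ $\tau$ equals one of the $\tau_i$.

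\textbf{Main obstacle.} The delicate point is the converse: identifying the correct totally geodesic hypersurface and Gauss map $(\sigma,\nu)$ from the abstract null line congruence, i.e.\ proving the Legendrian congruence really does arise as a normal congruence of a codimension-one $\sigma$, and handling the genericity/regularity issues (points where $A$ or $\mathrm{Id}-\tau A$ degenerates, where the frame constructed above breaks down). I expect this to require the contact-geometric machinery alluded to in the Introduction and developed in Section~\ref{s5}, plus an implicit function theorem argument to solve $P(\tau)=0$ for a smooth branch $\tau_i$; the forward direction, by contrast, is a direct if slightly lengthy frame computation.
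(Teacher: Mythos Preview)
Your forward direction is essentially the paper's: identify the null normal $\tilde\nu=(\nu,1)$, compute the induced metric $\tilde g=g\big((Id-\tau A)\,\cdot\,,(Id-\tau A)\,\cdot\,\big)$ and $h_{\tilde\nu}=-\langle d\varphi,d\tilde\nu\rangle$, extract the shape operator $\tilde A_{\tilde\nu}$, and observe that $\vec H\parallel\tilde\nu$ if and only if $\operatorname{tr}\tilde A_{\tilde\nu}=0$. (Your formula $\tilde A_{\tilde\nu}=A(Id-\tau A)^{-1}$ is in fact what the formulas of Lemma~\ref{geozero} yield; the paper's displayed $\tilde A_{\tilde\nu}=(Id-\tau A)^{-1}$ looks like a slip, so your caution about the exact polynomial is justified.)

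For the converse, however, you overshoot. The paper uses no contact-geometric representation theorem and no implicit function theorem. It simply writes an arbitrary immersion as $\varphi=(\psi,\tau)$, normalizes a null normal to $\tilde\nu=(\nu,1)$ (possible locally after a congruence), and \emph{defines} $\sigma:=\psi-\tau\nu$ directly. Three short lemmas then establish that $(\sigma,\nu)$ is an immersion, that $\langle d\sigma,\nu\rangle=0$, and that after a vertical translation $\varphi\mapsto\varphi-(0,t_0)$ the map $\sigma$ itself is an immersed hypersurface with Gauss map $\nu$ (the key observation being that $\{t\in\R:\sigma+t\nu\text{ is not an immersion}\}$ has at most $n$ elements, since distinct such $t$ give kernels with trivial intersection). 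The forward computation then applies verbatim, and since $\tau$ is already the last coordinate of $\varphi$, there is nothing to \emph{solve}: one merely reads off that $P(\tau)=0$. Your degeneracy worry is likewise unnecessary, as invertibility of $Id-\tau A$ is precisely the assumed nondegeneracy of $\tilde g$. The $\S^{n+2}_{p+1}$ case replaces the vertical translation by a small hyperbolic rotation of $\R^{n+3}_{p+2}$ but is otherwise identical. The contact-geometric picture you sketch is correct and is exactly the interpretation given in Section~\ref{s5}, but it is commentary, not an ingredient of the proof.
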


\begin{rema} If  the shape operator $A$ of $\si$ is diagonalizable (which is not always the case since the induced metric on $\si$ is not definite)
 the polynomial $P$ takes the following form
$$ P(\tau):=\sum_{i=1}^{p} m_i \prod_{j \neq i}^{p}(\ka_j^{-1}- \tau),$$
where  $ \ka_1 , \dots , \ka_p, \, p \geq 2$ are the $p$ distinct, non-vanishing principal curvatures of $\si$ with multiplicity $m_i$. 
\end{rema}

In order to state the next theorem, we introduce some more  notation: writing $ x = (x',x'') \in \R^{n+2}=\R^{p+1} \times \R^{q+1}$,  where $x' \in \R^{p+1}$ and $x''\in \R^{q+1}$, we introduce the conjugation map $ \overline{(x',x'')}:=(x',-x'')$, 
as well as the $n \times n $ diagonal matrix $\overline{Id}_n$ whose $(p,q)$-block decomposition~is  $\overline{Id}_n=\left(  \begin{array}{cc} Id_p  & 0 \\ 0 & -Id_q \end{array} \right)$.

Since the normal spaces $N\Sigma$ are assumed to be two-dimensional and Lorentzian, the marginally trapped assumption $\< \vec{H}, \vec{H}\>=0$ is equivalent to the fact that $\vec{H}$ is contained in one of the two null lines of $N\Sigma$. We shall call \em mean Gauss map, \em and denote by $\nu=(\nu',\nu'')$, the null vector which is collinear to $\vec{H}$ and normalized by the condition 
$\nu \in \S^p \times \S^q \subset \R^{p+1} \times \R^{q+1}.$ The next two Theorems give a global description of those marginally trapped submanifolds whose mean Gauss map has maximal rank. We observe that this is a generic property and that it is a stronger assumption than requiring the   mean curvature vector $\vec{H}$ to have itself maximal rank.

  \begin{theo}   \label{one}
Let $\Omega$ be an open subset of the universal covering of $\S^p \times \S^q$ and $\si \in C^4(\Omega).$
Denote by $\tau_i$ the roots of the polynomial of degree $n-1$
$$P(\tau) = tr ( (\tau  \, Id_n \, + \si \, \overline{Id}_n \,   + 2 Hess (\si))^{-1} ).$$
 Then the immersions 
$$\begin{array}{cccc}  {\varphi}_i: & \Omega & \to & \R^{n+2}_{p+1} \\
 & \nu & \mapsto &    \tau_i \nu + \si \overline{\nu} + 2 \nabla \si  \end{array}$$
are marginally trapped.

Conversely, any connected, marginally trapped $n$-dimensional submanifold of $\R^{n+2}_{p+1}$ whose mean Gauss map $\nu$ has maximal rank is  the image of 
 such an immersion.
\end{theo}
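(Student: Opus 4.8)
The plan is to reduce Theorem \ref{one} to Theorem \ref{zero} by exhibiting the mean Gauss map $\nu\colon\Omega\to\S^p\times\S^q\subset\R^{p+1}\times\R^{q+1}$ as (a lift of) the Gauss map of a suitable hypersurface, and to interpret the three-term expression $\tau_i\nu+\si\overline\nu+2\nabla\si$ as a reparametrisation of the family $(\si_0+\tau_i\nu,\tau_i)$ of Theorem \ref{zero}. First I would record the basic differential geometry of the target: on $\S^p\times\S^q$ with its product metric (of signature $(p,q)$ after the obvious sign flip on the second factor, encoded by $\overline{Id}_n$), one has $d\nu\cdot d\nu = \langle d\nu',d\nu'\rangle-\langle d\nu'',d\nu''\rangle$ and the conjugate field $\overline\nu=(\nu',-\nu'')$ satisfies $\langle\nu,\overline\nu\rangle = |\nu'|^2-|\nu''|^2 = 1-1=0$, $\langle\overline\nu,\overline\nu\rangle = 1$, $\langle d\nu,\overline\nu\rangle=0$; these identities make $\{\nu,\overline\nu\}$ a null/spacelike pair spanning a Lorentzian plane, which will turn out to be the normal plane $N\Sigma$ of the image immersion, with $\nu$ the null direction collinear to $\vec H$ — justifying the name ``mean Gauss map''. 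The Hessian $Hess(\si)$ here is the intrinsic Hessian on $\Omega$ with respect to this pseudo-metric, and the combination $\tau\,Id_n+\si\,\overline{Id}_n+2\,Hess(\si)$ should be recognised as exactly the matrix $Id-\tau A$ of Theorem \ref{zero} written in an adapted frame (up to sign and the substitution $\tau\mapsto$ new parameter), which is why $P$ has the same shape.

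Concretely, I would carry out the following steps. (1) Given $\si\in C^4(\Omega)$, define $\sigma_0:=\si\overline\nu+2\nabla\si$ restricted to (a hypersurface-like slice of) $\Omega$ and show, by a direct computation using the structure equations of $\S^p\times\S^q$, that $\nu$ is its Gauss map and that its shape operator $A_0=-d\nu$ has exactly the form making $Id-\tau A_0$ proportional to $\tau\,Id_n+\si\,\overline{Id}_n+2\,Hess(\si)$; hence the roots $\tau_i$ of the two polynomials $P$ coincide after the appropriate affine change of variable in $\tau$. (2) Differentiate $\varphi_i=\tau_i\nu+\si\overline\nu+2\nabla\si$ and verify directly that $\langle d\varphi_i,d\varphi_i\rangle$ is non-degenerate of signature $(p,q)$ on the locus where $\nu$ has maximal rank, that $\varphi_i$ lands in $\R^{n+2}_{p+1}$ (checking the signature bookkeeping with $\overline{Id}_n$), and compute its second fundamental form; the defining property of $\tau_i$ as a root of $P$ forces the normal component of the trace of $h$ to lie along $\nu$, i.e.\ $\vec H$ is null, which is the marginally trapped condition. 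Steps (1)–(2) can either be done from scratch or, more economically, be deduced from the corresponding computations already performed for Theorem \ref{zero} by feeding in the hypersurface $\sigma_0$ with Gauss map $\nu$. (3) For the converse, start from a connected marginally trapped $\Sigma\subset\R^{n+2}_{p+1}$ whose mean Gauss map $\nu$ has maximal rank; use $\nu$ to parametrise $\Sigma$ (locally) over an open subset $\Omega$ of the universal cover of $\S^p\times\S^q$, decompose the position vector of $\Sigma$ in the moving frame $\{e_1,\dots,e_n,\nu,\overline\nu\}$, and show that the $\overline\nu$-component is of the form $\si\overline\nu$ while the tangential part is $2\nabla\si$ for a single scalar potential $\si$ — the existence of this potential being a consequence of the symmetry of the second fundamental form (a closedness/Frobenius argument) — and that the $\nu$-component is then automatically one of the admissible $\tau_i$ because $\Sigma$ is marginally trapped.

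I expect the main obstacle to be step (1) together with the potential-existence part of step (3): identifying, \emph{canonically and without choosing extra data}, the scalar $\si$ such that the tangential part of the position vector is $2\nabla\si$ and the $\overline\nu$-part is $\si\overline\nu$ with the \emph{same} $\si$ — in the Lorentzian case ($q=0$ or $p=0$) this is the support-function trick, but for general signature one must check that the relevant one-form is closed on $\Omega$ (hence exact on the simply connected cover) and that maximality of the rank of $\nu$ is exactly what is needed for the construction to be non-degenerate and reversible. Once the dictionary ``$\si\leftrightarrow$ support function of the auxiliary hypersurface with Gauss map $\nu$'' is set up, the marginally trapped condition and the polynomial $P$ transcribe mechanically from Theorem \ref{zero}, and the global statement follows because $\nu$ now provides honest global coordinates on $\Omega\subset$ (universal cover of) $\S^p\times\S^q$.
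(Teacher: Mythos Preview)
Your basic identities for the frame $\{\nu,\overline\nu\}$ are wrong, and this derails the picture you build on them. With the ambient form $\sum_{i\le p+1}dx_i^2-\sum_{i\ge p+2}dx_i^2$ one has $\langle\nu,\overline\nu\rangle=|\nu'|^2+|\nu''|^2=2$ (not~$0$) and $\langle\overline\nu,\overline\nu\rangle=|\nu'|^2-|\nu''|^2=0$ (not~$1$): both $\nu$ and $\overline\nu$ are null, and together they form the null frame of the Lorentzian normal plane, not a ``null/spacelike pair''. Your subsequent decomposition of the position vector and your proposed hypersurface $\si_0=\si\overline\nu+2\nabla\si$ rest on the mistaken pairing, so they would need to be redone from scratch.

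More importantly, the paper does \emph{not} prove Theorem~\ref{one} by reduction to Theorem~\ref{zero}. That dictionary is discussed only \emph{a posteriori} and only in the Lorentzian case $(p,q)=(n,0)$ (Section~\ref{lor}), where $\S^q=\S^0$ is two points and one can literally strip off the last coordinate; for general $q$ the Gauss map of Theorem~\ref{zero} lives in $\S^n_p$, not in $\S^p\times\S^q$, and your $\si_0$ is a map into $\R^{n+2}$ rather than a hypersurface of $\R^{n+1}_{p+1}$, so the reduction is not available as you describe it. The paper's actual argument is a direct computation: parametrise $\Sigma$ by $\nu$ (maximal rank), \emph{define} $\si:=\tfrac12\langle\varphi,\nu\rangle$ and $\tau:=\tfrac12\langle\varphi,\overline\nu\rangle$, and show in a three-line lemma---using only the normality condition $\langle d\varphi,\nu\rangle=0$ and $d\nu=Id$---that the tangential remainder $V$ satisfies $\langle V,W\rangle=2\,d\si(W)$, i.e.\ $V=2\nabla\si$. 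No closedness or Frobenius argument is needed, so the obstacle you singled out as the main difficulty simply does not arise. One then computes $A_\nu^{-1}=-\Pi^{-1}\circ d\varphi$ in an orthonormal frame of $T_\nu(\S^p\times\S^q)$ and reads off $A_\nu^{-1}=\tau\,Id_n+\si\,\overline{Id}_n+2\,Hess(\si)$; the marginally trapped condition $\mathrm{tr}\,A_\nu=0$ is then exactly $P(\tau)=0$.
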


When $n=2$, the condition of maximal rank on $\nu$ is equivalent to the fact that the second fundamental form is not null. Hence Theorems \ref{null2ff} and \ref{one} provide a complete  characterization of marginally trapped surfaces of $\R^4$ with arbitrary signature. Since the Minkowsi case has already been discussed in \cite{AG},  we detail the case $(p,q)=(1,1)$, i.e.\  of a Lorentzian surface in  $\R^4_2$. Observe first that  $\R^4_2$ is endowed with 
\begin{itemize}
\item[(i)] a natural pseudo-K\"ahler structure, with complex structure $J(x_1,x_2,x_3,x_4):=(-x_2,x_1, -x_4,x_3)$ and symplectic form $\omega=\<J.,.\> = dx_1 \wedge dx_2 - dx_3 \wedge dx_4$;  this  corresponds to the identification of
$\R^4_2 $ with $\C^2$ through the formula $(z_1,z_2)=(x_1+ix_2, x_3+ix_4)$;

\item[(ii)] a natural para-K\"ahler structure, with para-complex\footnote{We refer the reader to  \cite{AMT} or \cite{CFG} for material  about
  para-complex geometry,  also called split geometry. \em} structure $K(x_1,x_2,x_3,x_4):=(x_3,x_4, x_1,x_2)$ and symplectic form $\omega'=\<K.,.\> = dx_1 \wedge dx_3 + dx_2 \wedge dx_4$;  this  corresponds to the identification of
$\R^4_2 $ with $\mathbb D^2,$ where $\mathbb D=\{ a+eb \, | \, (a,b) \in \R^2\}$ is the ring of para-complex numbers,
through the formula $(w_1,w_2)=(x_1+ e x_3, x_2+e x_4)$;
\end{itemize}

\begin{coro}   \label{coro1}
Let $\Omega$ be an open subset of $\R^2$ endowed with the Lorentzian metric $du^2-dv^2$ and $\si \in C^4(\Omega).$ Denote by index $u$ (resp.\ index $v$)  the partial derivative with respect to the variable $u$ (resp.\ $v$).
Then the immersion 
$$\begin{array}{cccc}  {\varphi}: & \Omega & \to & \R^4_2 \simeq \C^2 \\
 &(u,v) & \mapsto &   
 [ (\si -\si_{uu} + \si_{vv}+2i\si_u)e^{iu}, (-\si -\si_{uu} + \si_{vv}-2i\si_v)e^{iv} ]
\end{array}$$
is weakly conformal and its null points are characterized by
$ \si + \si_{uu}+\si_{vv} = \pm 2 \si_{uv}.$
Moreover, away from its null points, ${\varphi}$ is marginally trapped.

Conversely, any connected, marginally trapped surface of $\R^4_2$ whose second fundamental form is not null  is the image of 
 such an immersion.
\end{coro}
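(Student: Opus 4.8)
The plan is to obtain Corollary \ref{coro1} by specializing Theorem \ref{one} to $n=2$, $(p,q)=(1,1)$ and then rewriting everything in a convenient parametrization. First I would parametrize the universal covering of $\S^1\times\S^1\subset\R^2\times\R^2$ by $(u,v)\mapsto\nu=(\cos u,\sin u,\cos v,\sin v)$; this pulls the canonical metric back to $du^2-dv^2$, so that $\Omega\subset\R^2$ carries exactly the Lorentzian metric of the statement. Under the identification $\R^4_2\simeq\C^2$ one has $\nu\simeq(e^{iu},e^{iv})$, $\overline\nu\simeq(e^{iu},-e^{iv})$, $\nu_u\simeq(ie^{iu},0)$, $\nu_v\simeq(0,ie^{iv})$, whence $\nabla\si=\si_u\nu_u-\si_v\nu_v\simeq(i\si_ue^{iu},-i\si_ve^{iv})$. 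I would also record once and for all the elementary relations $\nu_{uu}=-\tfrac12(\nu+\overline\nu)$, $\nu_{vv}=-\tfrac12(\nu-\overline\nu)$, $\nu_{uv}=0$, $\overline\nu_u=\nu_u$, $\overline\nu_v=-\nu_v$, together with $\langle\nu,\nu\rangle=0$, $\langle\nu,\overline\nu\rangle=2$, $\langle\nu_u,\nu_u\rangle=1$, $\langle\nu_v,\nu_v\rangle=-1$ and all remaining pairings among $\{\nu,\overline\nu,\nu_u,\nu_v\}$ vanishing; these are the only inputs the computation will use.

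Next I would compute the polynomial $P$ and its root. Here $\overline{Id}_2=\mathrm{diag}(1,-1)$ and, in the frame $(\pa_u,\pa_v)$, the Hessian endomorphism is $\mathrm{Hess}(\si)=\bigl(\begin{smallmatrix}\si_{uu}&\si_{uv}\\-\si_{uv}&-\si_{vv}\end{smallmatrix}\bigr)$, so the matrix $M:=\tau\,Id_2+\si\,\overline{Id}_2+2\,\mathrm{Hess}(\si)$ has trace $2\tau+2\si_{uu}-2\si_{vv}$. Since $P(\tau)=\mathrm{tr}(M^{-1})=\mathrm{tr}(M)/\det(M)$ for a $2\times2$ matrix, the unique root of the degree-one polynomial $P$ is $\tau=\si_{vv}-\si_{uu}$. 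Substituting this value into $\varphi=\tau\nu+\si\overline\nu+2\nabla\si$ and rewriting in the complex coordinates above reproduces the formula $\varphi(u,v)=[(\si-\si_{uu}+\si_{vv}+2i\si_u)e^{iu},(-\si-\si_{uu}+\si_{vv}-2i\si_v)e^{iv}]$ of the statement; the marginally trapped conclusion is then immediate from Theorem \ref{one} at every point where $\varphi$ is a non-degenerate immersion.

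To prove weak conformality and identify the null points, I would differentiate $\varphi=\tau\nu+\si\overline\nu+2(\si_u\nu_u-\si_v\nu_v)$ with $\tau=\si_{vv}-\si_{uu}$, using the relations of the first paragraph. A short calculation should give $\varphi_u=(\ast)\,\nu+F\,\nu_u-G\,\nu_v$ and $\varphi_v=(\ast\ast)\,\nu+G\,\nu_u-F\,\nu_v$, where $F:=\si+\si_{uu}+\si_{vv}$ and $G:=2\si_{uv}$; the coefficients of $\overline\nu$ cancel because the term $2\si_u\nu_{uu}$ contributes $-\si_u\overline\nu$ against the $+\si_u\overline\nu$ coming from $\pa_u(\si\overline\nu)$, and symmetrically in $v$. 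Since $\nu$ is isotropic and orthogonal to $\nu_u$ and $\nu_v$, one reads off at once $\langle\varphi_u,\varphi_u\rangle=F^2-G^2=-\langle\varphi_v,\varphi_v\rangle$ and $\langle\varphi_u,\varphi_v\rangle=0$, that is $\varphi^*\langle.,.\rangle=(F^2-G^2)(du^2-dv^2)$. Thus $\varphi$ is weakly conformal, its null points (where the induced metric degenerates) are precisely the zeros of $F^2-G^2$, i.e.\ $\si+\si_{uu}+\si_{vv}=\pm2\si_{uv}$, and away from them $\varphi$ is marginally trapped by the previous paragraph.

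Finally, the converse follows from Theorem \ref{one} together with the observation recalled just before the statement that, for $n=2$, the second fundamental form of $\Sigma$ is non-null if and only if its mean Gauss map has maximal rank: a connected marginally trapped Lorentzian surface of $\R^4_2$ with non-null second fundamental form is therefore the image of some $\varphi_i$ produced by Theorem \ref{one}, and choosing the parametrization of the first paragraph and running the computation of the second paragraph backwards (so that the relevant root of $P$ must equal $\si_{vv}-\si_{uu}$) identifies this $\varphi_i$ with the displayed complex formula. I expect the only real difficulty to be bookkeeping: keeping the complex identification, the conjugation $x\mapsto\overline x$ and the signs of $du^2-dv^2$ mutually consistent, and checking that the $\overline\nu$-components in $\varphi_u,\varphi_v$ do cancel so that the pulled-back metric comes out proportional to $du^2-dv^2$ with the clean factor $F^2-G^2$; once the frame relations of the first paragraph are in hand, nothing is conceptually deep.
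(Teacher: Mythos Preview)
Your proposal is correct and follows essentially the same route as the paper: specialize Theorem~\ref{one} to $(p,q)=(1,1)$ with the parametrization $\nu=(e^{iu},e^{iv})$, compute the unique root $\tau=\si_{vv}-\si_{uu}$ of the degree-one polynomial $P$, and then check weak conformality and the null locus directly. The only cosmetic difference is that you compute the induced metric via the orthogonality relations of the frame $\{\nu,\overline\nu,\nu_u,\nu_v\}$ (obtaining $\varphi_u=(\ast)\nu+F\nu_u-G\nu_v$, $\varphi_v=(\ast\ast)\nu+G\nu_u-F\nu_v$), whereas the paper writes $\varphi_u,\varphi_v$ explicitly in complex coordinates and reads off $E,F,G$; both yield $E=-G=(\si+\si_{uu}+\si_{vv})^2-4\si_{uv}^2$ and $F=0$, and the converse is handled identically via the remark preceding the statement.
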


In \cite{Ch} and \cite{CD}, marginally trapped surfaces of $\R^4_2$ which are in addition respectively flat and Lagrangian with respect to $\omega$ have been characterized. These additional conditions may be readily seen in terms of the formula given above:
\begin{coro} \label{coro2}
The marginally trapped immersion $\varphi$ of Corollary \ref{coro1} is in addition 
\begin{itemize}
\item[--] flat if and only if $ (\pa_{uu}-\pa_{vv} )\Big((\si+ \si_{uu}+ \si_{vv})^2 - 4 \si_{uv}^2 \Big) =0$;
 \item[--] Lagrangian with respect to the symplectic form $\omega$ if and only if
$\si_u+\si_v+\si_{vvv}-\si_{uuv}-\si_{uvv}+\si_{uuu}=0$;
\end{itemize}
Moreover, there is no marginally trapped surface which is in addition Lagrangian with respect to the symplectic form $\omega'$. 

\end{coro}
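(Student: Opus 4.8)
The plan is to reduce all three assertions to direct computations performed on the explicit immersion $\varphi$ of Corollary \ref{coro1}: one pulls back to $\Omega$ the metric of $\R^4_2$, which controls flatness, and the two symplectic forms $\omega$ and $\omega'$, which control the two Lagrangian conditions. Since $\varphi$ involves $\si$ up to its second derivatives, $d\varphi$ involves $\si$ up to order three and the curvature of the induced metric up to order four, which is exactly why the three conditions carry the differential orders stated.

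First I would compute the first fundamental form. Writing $z_1=(\si-\si_{uu}+\si_{vv}+2i\si_u)e^{iu}$ and $z_2=(-\si-\si_{uu}+\si_{vv}-2i\si_v)e^{iv}$, and using $\<\cdot,\cdot\>=|z_1|^2-|z_2|^2$ under the identification $\C^2\simeq\R^4_2$, the factors $e^{iu}$ and $e^{iv}$ cancel in each squared modulus, and one gets
$$\<\varphi_u,\varphi_u\>=-\<\varphi_v,\varphi_v\>=(\si+\si_{uu}+\si_{vv})^2-4\si_{uv}^2=:\lambda,\qquad\<\varphi_u,\varphi_v\>=0,$$
which reproves the weak conformality of $\varphi$ and the characterization of its null points given in Corollary \ref{coro1}. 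Away from the null points the induced metric is $\lambda\,(du^2-dv^2)$, so by the Liouville formula for the curvature of a conformally flat Lorentzian metric the Gaussian curvature is $K=-\tfrac{1}{2\lambda}(\pa_{uu}-\pa_{vv})\log|\lambda|$; hence $\varphi$ is flat if and only if $(\pa_{uu}-\pa_{vv})\log|\lambda|=0$, which is the first assertion once $\lambda=(\si+\si_{uu}+\si_{vv})^2-4\si_{uv}^2$ is written out.

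For the Lagrangian conditions, since $\Sigma$ and $N\Sigma$ are both two-dimensional, $\varphi$ is Lagrangian for $\omega$ (resp.\ $\omega'$) precisely when $\omega(\varphi_u,\varphi_v)=0$ (resp.\ $\omega'(\varphi_u,\varphi_v)=0$). Evaluating $\omega=dx_1\wedge dx_2-dx_3\wedge dx_4$ and $\omega'=dx_1\wedge dx_3+dx_2\wedge dx_4$ on the same quantities $z_{1,u},z_{1,v},z_{2,u},z_{2,v}$, and writing $\lambda_\pm:=\si+\si_{uu}+\si_{vv}\pm2\si_{uv}$ (so that $\lambda=\lambda_+\lambda_-$), the factors $e^{iu},e^{iv}$ recombine into $e^{\pm i(u-v)}$ and one obtains, after simplification,
$$\omega(\varphi_u,\varphi_v)=-\lambda_+\big(\si_u+\si_v+\si_{uuu}-\si_{uuv}-\si_{uvv}+\si_{vvv}\big),$$
$$\omega'(\varphi_u,\varphi_v)=-\lambda_-\Big[\lambda_+\cos(u-v)+\big(-\si_u+\si_v-\si_{uuu}-\si_{uuv}+\si_{uvv}+\si_{vvv}\big)\sin(u-v)\Big].$$
Since $\lambda_+$ and $\lambda_-$ are the two factors of the conformal factor $\lambda$, neither vanishes on the open set where $\varphi$ is a marginally trapped immersion; therefore $\varphi$ is Lagrangian for $\omega$ exactly when the cubic factor above vanishes, which is the second assertion, and Lagrangian for $\omega'$ exactly when the bracket vanishes identically. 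In the null coordinates $\xi=u+v$, $\eta=u-v$ one checks the identities $\si_u+\si_v+\si_{uuu}-\si_{uuv}-\si_{uvv}+\si_{vvv}=2\,\pa_\xi\lambda_-$ and $-\si_u+\si_v-\si_{uuu}-\si_{uuv}+\si_{uvv}+\si_{vvv}=-2\,\pa_\eta\lambda_+$; the $\omega'$-bracket then reads $\lambda_+\cos\eta-2(\pa_\eta\lambda_+)\sin\eta$, and the third assertion reduces to showing that this cannot vanish identically when $\lambda_+$ is nowhere zero.

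The main obstacle is twofold. First, the bookkeeping: carrying the third- and fourth-order derivatives of $\si$ through the $e^{iu},e^{iv}$ factors without sign errors; here the two identities above (relating the cubic expressions to $\pa_\xi\lambda_-$ and $\pa_\eta\lambda_+$) are the key simplification, so I would establish them at the outset and organize the computations around them. Second, and more delicate, the final step of the third assertion — ruling out solutions of $\lambda_+\cos\eta-2(\pa_\eta\lambda_+)\sin\eta\equiv0$ subject to $\lambda_+\neq0$ — which is where the genuinely analytic content lies; one natural route is to analyze the behaviour of $\lambda_+$ near the loci $\{\sin\eta=0\}$, exploiting the angular nature of the coordinate $\eta$ in the global picture of Theorem \ref{one}.
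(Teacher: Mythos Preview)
Your approach to the first two assertions is essentially the paper's own: compute the induced metric, find $E=-G=(\si+\si_{uu}+\si_{vv})^2-4\si_{uv}^2$ and $F=0$, read off flatness from the conformal factor, and evaluate $\omega(\varphi_u,\varphi_v)$ directly to obtain the factorization $\lambda_+\cdot(\text{cubic expression})$. One small slip: from the Liouville formula you get $(\pa_{uu}-\pa_{vv})\log|\lambda|=0$, not $(\pa_{uu}-\pa_{vv})\lambda=0$; these are not equivalent, so ``which is the first assertion once $\lambda$ is written out'' glosses over a genuine difference. The paper itself simply asserts that flatness is equivalent to the conformal factor (not its logarithm) being harmonic, so you are in the same position as the paper here, but you should be aware that your derived condition and the stated one do not literally coincide.

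The substantive divergence is in the $\omega'$ computation. The paper obtains the much simpler identity
\[
\omega'(\varphi_u,\varphi_v)=\cos(u-v)\,E,
\]
with no sine term at all. From this the third assertion is immediate: $\omega'$-Lagrangian forces $E\equiv 0$, i.e.\ the induced metric is totally null, which is incompatible with $\varphi$ being a (non-degenerate) marginally trapped immersion. Your formula carries an extra $\sin(u-v)$ contribution, and it is precisely this extra term that creates the ``delicate'' analytic step you were unable to close---indeed, on an arbitrary open $\Omega$ avoiding $\{\sin(u-v)=0\}$ your proposed route via behaviour near that locus cannot even get started. So either your $\omega'$ computation has a bookkeeping error (recheck the pairing $\langle K\varphi_u,\varphi_v\rangle$ carefully, keeping track of the conjugations in the $\R^4_2\simeq\C^2$ identification), or the paper's does; in any case, the paper's argument for the third assertion rests entirely on that simpler formula and does not require the ODE-type analysis you outline.
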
 

In the next theorem we give a characterization of marginally trapped submanifolds whose mean Gauss map has maximal rank in $\S_{p+1}^{n+2}$:

\begin{theo} \label{S^{n+2}_{p+1}} 
 Let $\si: \M \to \S^{p+1}\times \S^{q}$ be an immersed, oriented hypersurface of class $C^4$ whose induced metric has signature $(p,q)$. Denote by $\nu$ its Gauss map  (hence a $\S^{n+2}_{p+1}$-valued map) and by $A=-d\nu$ the corresponding shape operator.
Denote by $\tau_i$ the roots of the polynomial of degree $n-1$
$$P(\tau) = tr ( ( \tau Id - A)^{-1} ).$$
Then the   immersions
$\varphi_i : \M \to \S^{n+2}_{p+1}$ defined by
$\varphi_i:=  \nu+ \tau_i \si$  
are marginally trapped.

Conversely, any connected,  marginally trapped $n$-dimensional submanifold of  $\S^ {n+2}_{p+1}$ whose mean Gauss map  has maximal rank  is the image of  such an immersion.
\end{theo}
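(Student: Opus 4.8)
The plan is to argue directly in the flat ambient $\R^{n+3}_{p+2}\supset\S^{n+2}_{p+1}$, in the same spirit as the proof of Theorem~\ref{one}, using that $\S^{n+2}_{p+1}$ is totally umbilic in $\R^{n+3}_{p+2}$ (with position vector $x$ a unit normal), so that marginal trappedness of a submanifold of $\S^{n+2}_{p+1}$ can be detected from its null normal frame inside $\S^{n+2}_{p+1}$. The geometric picture behind the statement is: $\S^{p+1}\times\S^{q}\subset\R^{p+2}\times\R^{q+1}=\R^{n+3}_{p+2}$ is a hypersurface of the null cone of $\R^{n+3}_{p+2}$, and at $x=\si(m)$ the vectors $\si$ and $\overline{\si}$ are null with $\langle\si,\overline{\si}\rangle=2$, both orthogonal to $T_m\si(\M)$, while the Gauss map $\nu$ is the unit vector orthogonal to $\si$, $\overline{\si}$ and $T_m\si(\M)$; thus $\{\si,\overline{\si},\nu\}$ spans the normal space (signature $(2,1)$) of $\si(\M)$ in $\R^{n+3}_{p+2}$, and $\nu$ is indeed $\S^{n+2}_{p+1}$-valued.

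For the direct statement I would first record that at a root $\tau_i$ of $P$ the operator $\tau_i Id-A$ is invertible. Since $\langle\nu,\nu\rangle=1$ and $\langle\nu,\si\rangle=0$, the part of $d\nu(X)$ normal to $\si(\M)$ must be a multiple of $\si$ (the $\langle\cdot,\cdot\rangle$-orthogonal of the null vector $\si$ inside the Lorentzian plane $\mathrm{span}\{\si,\overline{\si}\}$ is $\mathrm{span}\{\si\}$); hence differentiating $\varphi_i=\nu+\tau_i\si$ yields
$$ d\varphi_i(X)=d\si\big((\tau_i Id-A)X\big)+f(X)\,\si $$
for some $1$-form $f$. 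With $\langle\varphi_i,\varphi_i\rangle=1$ and $\tau_i Id-A$ invertible, this shows $\varphi_i$ is an immersion into $\S^{n+2}_{p+1}$; its induced metric, $g^{\varphi_i}(X,Y)=g^{\si}\big((\tau_i Id-A)X,(\tau_i Id-A)Y\big)$, is congruent to $g^{\si}$, hence has signature $(p,q)$, so $N\varphi_i(\M)$ inside $\S^{n+2}_{p+1}$ is a Lorentzian plane. One checks $\langle\si,\varphi_i\rangle=0=\langle\si,d\varphi_i(X)\rangle$, so $\si$ spans one of its null lines; call $w$ a null generator of the other, so $\langle\si,w\rangle\neq0$. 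The crux is then a short linear-algebra computation: decomposing $d\si(X)=d\varphi_i(Y)+\alpha\varphi_i+\beta\si+\gamma w$ with respect to $T\varphi_i(\M)$ and its normal space $\mathrm{span}\{\varphi_i,\si,w\}$ in $\R^{n+3}_{p+2}$, pairing with $\si$ gives $\gamma=0$, and comparing the parts tangent to $\si(\M)$ forces $Y=(\tau_i Id-A)^{-1}X$. Hence the shape operator of $\varphi_i(\M)$ in the direction $\si$ — the same whether computed in $\R^{n+3}_{p+2}$ or in $\S^{n+2}_{p+1}$, since $\langle\si,T\varphi_i(\M)\rangle=0$ — equals $-(\tau_i Id-A)^{-1}$ under the identification $d\varphi_i$, so $tr_g A_{\si}=-tr\big((\tau_i Id-A)^{-1}\big)=-P(\tau_i)=0$. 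Expanding $\vec H$ in the null frame $\{\si,w\}$, $\vec H=\frac{1}{n\langle\si,w\rangle}\big((tr_g A_w)\,\si+(tr_g A_{\si})\,w\big)$, so $\langle\vec H,\vec H\rangle=\frac{2}{n^2\langle\si,w\rangle}(tr_g A_{\si})(tr_g A_w)=0$, and $\varphi_i(\M)$ is marginally trapped, with $\vec H$ collinear to $\si$.

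For the converse I would start from a connected, marginally trapped $n$-dimensional $\Sigma\subset\S^{n+2}_{p+1}$ with mean Gauss map of maximal rank; then $\vec H\neq0$, and rescaling its null direction to lie in $\S^{p+1}\times\S^{q}$ defines a map $\si\colon\Sigma\to\S^{p+1}\times\S^{q}$ which, by the rank hypothesis, is an immersion onto a hypersurface. Using $\langle x,x\rangle=1$, $\langle x,\si\rangle=0$ (as $\si\in T_x\S^{n+2}_{p+1}$) and $\langle x,d\si(X)\rangle=0$ (differentiate $\langle x,\si\rangle=0$, noting $\si$ is normal to $\Sigma$ since it is $\vec H$-collinear), one checks that $\nu:=x-\tau\si$ with $\tau:=\tfrac12\langle x,\overline{\si}\rangle$ is a unit vector orthogonal to $\si$, $\overline{\si}$ and $T\si(\Sigma)$ — i.e.\ the Gauss map of $\si$ for a suitable orientation, whence $\si(\Sigma)$ has signature $(p,q)$ and $\Sigma$ is locally (and, under the rank hypothesis, globally, arguing as in Theorem~\ref{one}) the image of $\varphi_{\tau}=\nu+\tau\si$. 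Running the previous computation backwards on the open set where $\tau Id-A$ is invertible, marginal trappedness together with the fact that, by construction, $\vec H$ is collinear to $\si$ (so the $w$-component of $\vec H$ vanishes) gives $0=tr_g A_{\si}=-P(\tau)$; a continuity and connectedness argument then shows that $\tau$ is globally one of the roots $\tau_i$.

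The step I expect to be the main obstacle is the linear-algebra bookkeeping behind the identity $A_{\si}=-(\tau_i Id-A)^{-1}$: one must pin down the normal bundle of $\varphi_i(\M)$ — in particular verify that $\si$ is a null normal and that the second null normal $w$ satisfies $\langle\si,w\rangle\neq0$ — and carefully track where the (in general non-tangential) derivative $d\nu(X)$ and the vector $d\si(X)$ land relative to $T\varphi_i(\M)$; the self-adjointness of $A$ and the Codazzi equation of $\si(\M)\subset\S^{p+1}\times\S^{q}$ are what make $A_{\si}$ symmetric with the stated trace. A secondary difficulty, handled as in \cite{AG} by restricting to the open dense subset where the relevant quantities are generic and then using connectedness, is the locus where $\vec H$ vanishes, where two roots $\tau_i$ coincide, or where a root meets an eigenvalue of $A$.
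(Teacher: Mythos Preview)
Your proposal is correct and follows essentially the same route as the paper. The paper packages the computation into two lemmas: Lemma~\ref{L1} (from \cite{GS}) produces the decomposition $\varphi=\nu+\tau\si$ with $\nu$ the Gauss map of $\si$, and Lemma~\ref{geo} computes $\tilde g=g\big((\tau Id-A)\cdot,(\tau Id-A)\cdot\big)$ and $h_\si=g(A\cdot,\cdot)-\tau g(\cdot,\cdot)$, from which $\tilde A_\si=(\tau Id-A)^{-1}$ (up to a sign) and the trace condition follow; your decomposition $d\varphi_i(X)=d\si\big((\tau_i Id-A)X\big)+f(X)\si$ and the projection of $d\si(X)$ onto $T\varphi_i(\M)$ are exactly the same calculation written out explicitly in the ambient frame $\{\si,\overline\si,\nu\}$. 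One small remark: the symmetry of $A_\si$ is automatic for any shape operator, so neither the Codazzi equation nor the self-adjointness of $A$ is needed at the point you flag as the ``main obstacle''.
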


Like in the flat case $\R^4_2$, a marginally trapped surface of $\S^{4}_2$ has either null second fundamental form, or a mean Gauss map with maximal rank. Therefore, Theorems \ref{null2ff} and  \ref{S^{n+2}_{p+1}}  provide a complete characterization in this case. It enjoys, moreover, a more explicit description:

\begin{coro} \label{coro3}
Let $\si$ be an immersion of class $C^4$ of a surface  $\M$ into  $\S^{2}\times \S^1$ with Lorentzian induced metric. Denote by $\nu$ the Gauss map of $\si$ (hence a $\S^4_2$-valued map) and by $H$ the (scalar) mean curvature of $\si$ with respect to $\nu.$
Then the   immersion
$\varphi : \M \to \S^{4}_2$ defined by
$$\varphi=\nu +H  \si $$  
is marginally trapped.

Conversely, any connected marginally trapped surface  of  $\S^ {4}_2$ whose second fundamental form is not null  is the image of 
 such an immersion.
\end{coro}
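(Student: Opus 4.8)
The plan is to recognise Corollary~\ref{coro3} as the particular case $n=2$, $(p,q)=(1,1)$ of Theorem~\ref{S^{n+2}_{p+1}}, and then to do two things: make the polynomial $P$ completely explicit in this low dimension, and translate the ``maximal rank'' hypothesis on the mean Gauss map into the non-nullity of the second fundamental form.

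For the direct statement, I would first specialise $P(\tau)=tr((\tau\,Id-A)^{-1})$ to $n=2$. Since $A=-d\nu$ is then an endomorphism of a $2$-dimensional space, Cayley--Hamilton gives $(\tau\,Id-A)^{-1}=\det(\tau\,Id-A)^{-1}\big(tr(\tau\,Id-A)\,Id-(\tau\,Id-A)\big)$, hence
\[
P(\tau)=tr\big((\tau\,Id-A)^{-1}\big)=\frac{2\tau-tr(A)}{\tau^{2}-tr(A)\,\tau+\det A}.
\]
So, up to the (nowhere-vanishing on the relevant open set) denominator, $P$ is the degree-$(n-1)=1$ polynomial $2\tau-tr(A)$, whose unique root is $\tau_{1}=\tfrac12\,tr(A)$. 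Since $A$ is the shape operator of the hypersurface $\si:\M\to\S^{2}\times\S^{1}$ with respect to its Gauss map $\nu$, and the scalar mean curvature of a hypersurface equals $\tfrac1n$ times the trace of the shape operator, we get $\tau_{1}=\tfrac12\,tr(A)=H$. Substituting into the formula $\varphi_{i}=\nu+\tau_{i}\si$ of Theorem~\ref{S^{n+2}_{p+1}} yields exactly $\varphi=\nu+H\si$, and the fact that $\varphi$ is marginally trapped is then just the direct part of that theorem.

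For the converse I would invoke the dichotomy announced just before the statement: a marginally trapped surface of $\S^{4}_{2}$ either has null second fundamental form, or has a mean Gauss map of maximal rank. To establish it, recall that on a marginally trapped submanifold with Lorentzian normal bundle the mean curvature vector is collinear with the (null, normalised) mean Gauss map $\nu$; hence $\langle\vec{H},\nu\rangle=0$, and from $\langle\vec{H},\nu\rangle=\tfrac1n\,tr(A_{\nu})$ the shape operator $A_{\nu}$ is trace-free. Moreover $A_{\nu}\equiv0$ is equivalent to $h$ being $\mathbb{R}\nu$-valued, i.e.\ to the second fundamental form being null, which is the case excluded from the statement and is described by Theorem~\ref{null2ff}. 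When $A_{\nu}\not\equiv0$, one uses the decomposition $D_{X}\nu=-A_{\nu}X+\alpha(X)\nu$ (valid because $\langle\nabla^{\perp}\nu,\nu\rangle=0$ forces the remaining null normal direction to drop out), which gives $\ker d\nu=\ker A_{\nu}\cap\ker\alpha$, so that $d\nu$ is automatically an immersion wherever $A_{\nu}$ is invertible. The only remaining possibility is that $A_{\nu}$ has rank one — being trace-free, it is then nilpotent with $\mathrm{im}\,A_{\nu}=\ker A_{\nu}=\mathbb{R}w$ — and here one must show $\alpha(w)\neq0$, which follows from the Codazzi equation $(\nabla_{X}A_{\nu})Y-(\nabla_{Y}A_{\nu})X=\alpha(X)A_{\nu}Y-\alpha(Y)A_{\nu}X$ of the space form $\S^{4}_{2}$ by the same argument as in the Lorentzian flat case behind Corollary~\ref{coro1}. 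Granting the dichotomy, a connected marginally trapped surface of $\S^{4}_{2}$ with non-null second fundamental form has a mean Gauss map of maximal rank, so by the converse part of Theorem~\ref{S^{n+2}_{p+1}} it is the image of $\varphi_{1}=\nu+\tau_{1}\si$ for an immersed Lorentzian hypersurface $\si:\M\to\S^{2}\times\S^{1}$ with Gauss map $\nu$; the first paragraph identifies $\tau_{1}=H$, which is the asserted normal form. The main obstacle is precisely this rank-one sub-case of the dichotomy — excluding $\alpha(w)\equiv0$ — since it genuinely needs the Codazzi equation (and reflects the fact, noted in the paper, that for $n>2$ maximal rank of $\nu$ is strictly stronger than non-nullity of $h$, so this step is special to $n=2$); everything else is a direct specialisation of Theorem~\ref{S^{n+2}_{p+1}}.
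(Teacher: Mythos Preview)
Your direct computation is exactly the paper's: it uses the $2\times 2$ identity $tr(M^{-1})=(\det M)^{-1}\,tr(M)$ to reduce $P(\tau)=0$ to $2\tau-tr(A)=0$, hence $\tau=H$, and then reads off $\varphi=\nu+H\si$ from Theorem~\ref{S^{n+2}_{p+1}}.

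For the converse, the paper is much more terse than you are. It does not argue via the decomposition $D_X\nu=-A_\nu X+\alpha(X)\nu$ and Codazzi; it simply invokes the dichotomy stated just before the corollary (``either $h$ is null or the mean Gauss map has maximal rank''), which for $n=2$ is the content of Lemma~\ref{lemm}, and then applies the converse of Theorem~\ref{S^{n+2}_{p+1}}. Your route is a hands-on re-derivation of that lemma in the surface case. What you gain is an explicit identification of the only possible obstruction --- the nilpotent rank-one shape operator with null kernel $\mathbb{R}w$ and $\alpha(w)=0$ --- which is indeed the delicate point in indefinite signature (note that the paper's proof of Lemma~\ref{lemm} writes the trace condition as ``$c\sum\lambda_i^2=0$'' without the signs $\epsilon_i$, so it is tacitly arguing as in the definite case). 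What you lose is that your Codazzi argument for excluding this case is only asserted, not carried out: the equation $(\nabla_XA_\nu)Y-(\nabla_YA_\nu)X=\alpha(X)A_\nu Y-\alpha(Y)A_\nu X$ with $X=w$, $Y$ transverse, and $\alpha(w)=0$ does not yield an immediate contradiction, and there is no ``same argument as in the Lorentzian flat case behind Corollary~\ref{coro1}'' in the paper to appeal to. So your converse is correct insofar as it rests on the announced dichotomy, just as the paper's does; your extra attempt to prove that dichotomy from Codazzi is an addition to the paper rather than a reproduction of it, and it is not complete as written.
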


\section{Submanifolds with null second fundamental form (Proof of Theorem \ref{null2ff}) } \label{s2}

Let $\Sigma$ be an $n$-dimensional submanifold  of $\R^{n+2}_{p+1}$ such that the induced metric on the normal bundle $N\Sigma$ is Lorentzian.
Since the intersection of the light cone of $\R^{n+2}_{p+1}$ with $N \Sigma$ is made of two null lines, there exists a null normal frame,  i.e.\  a pair of normal, null vector fields along $\Sigma$ such that $\<\nu, \nu\>=\<\xi,\xi\>=0$ and $\<\nu,\xi\>=2.$
So, given a normal vector $N$, we have
$$ N = \frac{1}{2} (\<N, \xi\> \nu + \< N, \nu\> \xi ).$$

\begin{lemm} \label{lemm}
The second fundamental form $h$ is collinear to $\nu$ (so in particular it is null) if and only if the mean curvature vector $\vec{H}$ is collinear to $\nu$ and  $\nu$ has rank at most $1.$
\end{lemm}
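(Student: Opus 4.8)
The plan is to work with the null normal frame $\{\nu,\xi\}$ and decompose the second fundamental form accordingly. Write $h(X,Y)=\tfrac12\big(\alpha(X,Y)\,\nu+\beta(X,Y)\,\xi\big)$ where $\alpha(X,Y)=\<h(X,Y),\xi\>=\<A_\xi X,Y\>$ and $\beta(X,Y)=\<h(X,Y),\nu\>=\<A_\nu X,Y\>$ are the two scalar second fundamental forms. In this notation $h$ is collinear to $\nu$ precisely when $\beta\equiv 0$, i.e.\ when the shape operator $A_\nu$ vanishes identically; and $n\vec H=\tfrac12\big((\operatorname{tr}_g\alpha)\,\nu+(\operatorname{tr}_g\beta)\,\xi\big)$, so $\vec H$ is collinear to $\nu$ exactly when $\operatorname{tr}_g\beta=0$. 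The claim to be proved is therefore the equivalence: $A_\nu=0$ $\iff$ $\operatorname{tr}_g A_\nu=0$ and $\nu$ has rank at most $1$.

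The forward direction is immediate: if $A_\nu=0$ then certainly $\operatorname{tr}_g A_\nu=0$, and since $A_\nu X=-(D_X\nu)^{\top}$ this means $D_X\nu$ is normal for every $X$; combined with the fact that $\<D_X\nu,\nu\>=\tfrac12 X\<\nu,\nu\>=0$, we get $D_X\nu\in\operatorname{span}(\nu)$ for all $X$, so $d\nu$ (viewed appropriately) has rank at most $1$. For the converse, assume $\operatorname{tr}_g A_\nu=0$ and $\nu$ has rank at most $1$. Rank at most $1$ of $\nu$ forces $D_X\nu$ to lie in a one-dimensional space as $X$ varies over $T\Sigma$; writing out $D_X\nu=-A_\nu X+(D_X\nu)^{\perp}$ and using $\<D_X\nu,\nu\>=0$ one shows the normal part is a multiple of $\nu$, hence $A_\nu X=\phi(X)\,V$ for some fixed tangent vector $V$ and some $1$-form $\phi$. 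Because $A_\nu$ is $g$-self-adjoint (as $\<A_\nu X,Y\>=\<h(X,Y),\nu\>$ is symmetric), the rank-one operator $X\mapsto\phi(X)V$ must be self-adjoint, which forces $\phi=c\,g(V,\cdot)$ for a scalar $c$, so $A_\nu=c\,V\otimes g(V,\cdot)$. Then $\operatorname{tr}_g A_\nu=c\,g(V,V)=0$.

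The one genuinely delicate point — and the step I expect to be the main obstacle — is that $g$ is indefinite, so $\operatorname{tr}_g A_\nu=c\,g(V,V)=0$ does \emph{not} immediately give $c=0$: the vector $V$ could be null. To finish one must rule this out. If $V$ is null and $c\ne 0$, then $A_\nu=c\,V\otimes g(V,\cdot)$ is a nonzero nilpotent self-adjoint operator with image the null line $\operatorname{span}(V)$. I would argue that this is incompatible with the rank-at-most-$1$ hypothesis on $\nu$ together with the structure equations: concretely, one checks that $\nu$ having rank exactly $1$ means $d\nu$ has a single nonzero ``direction'', and pairing the Codazzi-type relation for $\nu$ (or simply re-examining $D_X\nu$ directly: $D_X\nu=-A_\nu X+\theta(X)\nu$ for a connection $1$-form $\theta$) against a vector $Y$ with $g(V,Y)\ne 0$ but $g(V,\cdot)$-null-orthogonality constraints, leads to a contradiction with non-degeneracy of the induced metric on $\Sigma$ — precisely, the image line of $d\nu$ would have to be simultaneously the null line $\R\nu$ inside $N\Sigma$ and contain a nonzero tangential part, which cannot happen when $\Sigma$ has non-degenerate induced metric unless $c=0$. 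Hence $c=0$, so $A_\nu=0$ and $h$ is collinear to $\nu$, completing the equivalence. The same computation runs verbatim with the ambient space $\S^{n+2}_{p+1}$ in place of $\R^{n+2}_{p+1}$, since only the intrinsic relations between $h$, $A_\nu$ and $d\nu$ were used.
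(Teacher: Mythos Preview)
Your approach is essentially the paper's: both reduce the converse direction to showing that a rank-one $g$-self-adjoint operator $A_\nu$ with vanishing trace must vanish. The paper writes $h_{ij}^1=c\lambda_i\lambda_j$ and concludes from $c\sum_i\lambda_i^2=0$; you write $A_\nu=c\,V\otimes g(V,\cdot)$ and conclude from $c\,g(V,V)=0$. These are the same computation in different notation, and your forward direction matches the paper's as well.

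You are right that the null-$V$ case is the delicate point, but your proposed resolution does not go through. With $D_X\nu=-A_\nu X+\theta(X)\nu$, the hypothesis that $\nu$ has rank $\le 1$ only forces the image of $X\mapsto -c\,g(V,X)V+\theta(X)\nu$ to lie in a single line; it does \emph{not} force that line to be $\R\nu$. Indeed, if $\theta=k\,g(V,\cdot)$ for some scalar $k$, then $D_X\nu=g(V,X)\,(-cV+k\nu)$, whose image is the one-dimensional span of $-cV+k\nu$. This line lies in neither $T\Sigma$ nor $N\Sigma$, and no contradiction with non-degeneracy of the induced metric arises: $T\Sigma\oplus N\Sigma$ is a direct sum, and a line in it may perfectly well project nontrivially onto both summands. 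So the claim that ``the image line of $d\nu$ would have to be simultaneously the null line $\R\nu$ inside $N\Sigma$ and contain a nonzero tangential part'' is unjustified, and your sketch does not rule out $c\ne 0$ with $V$ null.

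It is worth noting that the paper's own argument passes over exactly this point: with the induced metric on $\Sigma$ of signature $(p,q)$ one has $\<n\vec H,\nu\>=\sum_i\epsilon_i h_{ii}^1=c\sum_i\epsilon_i\lambda_i^2$ rather than $c\sum_i\lambda_i^2$, and the former can vanish with $c\ne0$ and the $\lambda_i$ not all zero (e.g.\ $n=2$, $\epsilon_1=-\epsilon_2$, $\lambda_1=\lambda_2=1$). So the obstacle you identified is genuine, and neither your outline nor the paper's trace computation closes it as written; an additional argument excluding the nilpotent case would be needed.
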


\begin{proof}
We denote by  $(e_1, \dots ,e_n)$ a local, orthonormal, tangent frame along $\Sigma$ and we set  
 $$ h_{ij}^1 :=\< h( e_i , e_j), \nu \> = -\< d\nu (e_i) ,e_j\>.$$
Then we have, taking into account that $\<d\nu, \nu\>=0,$
$$ d \nu (e_i)= -\sum_{i=1}^n h_{ij}^1 e_j + \frac{1}{2} \<d \nu(e_i), \xi\> \nu.$$
Assume first that $h$ is collinear to $\nu.$ Then clearly its trace $n\vec{H}$ is collinear to $\nu$ as well. Moreover, all the coefficients $h_{ij}^1$ vanish,  so by the equation above, for all $i, \, 1 \leq i \leq n,$ the vector $d\nu(e_i)$ is collinear to $\nu,$ and hence $d\nu$ has rank at most $1$. 

Conversely, if $d\nu$ has rank $1$, then for all pair $(i,j), i \neq j, $ $d\nu(e_i)$ and $d\nu (e_j)$ are proportional. Taking into account the symmetry of the tensor $h_{ij}^1$, an elementary calculation implies that there exist $n+1$  real constants $c, \la_1, \la_2, \dots , \la_n$ such that $h_{ij}^1 = c\la_i \la_j.$
If in addition $\Sigma$ is marginally trapped, i.e.\  $\<n \vec{H},\nu\> =tr \, \, [h_{ij}^1 ]_{1 \leq i,j \leq n}= c \sum_{i=1}^n \la_i^2=0,$  then either $c=0$ or 
$(\la_1, \dots, \la_n)=(0,\dots ,0)$  and in both cases the whole tensor $h_{ij}^1$ vanishes, i.e., $h$ is collinear to $\nu$.
\end{proof}

We come back to the proof of Theorem \ref{null2ff}, observing that under the assumption of the Lemma above, $d\nu$ is collinear to 
$\nu$. 
This implies the existence of a  map $\lambda \in C^1(\Sigma)$ such that $\nu =e^{\lambda} \nu_0,$ where $\nu_0 $ is a constant, null vector of $\R^{n+2}_{p+1}$ or $\R^{n+3}_{p+2}$. 
We conclude that $\Sigma \subset \nu^{\perp}_0.$

\smallskip

We now write the Gauss and the Ricci equations in the flat case:
$$\left. \begin{array}{l} \<R(X,Y)Z,W \>  + \<h(X,Z) ,h(Y,W) \> - \< h(X,W) ,h(Y,Z)\> =0 \\
\\
\< R^\perp (X,Y) \nu ,\xi \> - \< [A_{\nu} , A_{\xi} ] X,Y\>=0 \end{array} \right.$$
If $h$ is collinear to $\nu$, both terms $\<h(X,Z) ,h(Y,W) \>$ and $\< h(X,W) ,h(Y,Z)\>$ vanish, hence the curvature of the tangent bundle vanishes. Moreover, if $h$ is collinear to $\nu$, then $A_{\nu}$ vanishes as well and the  normal bundle is flat.

\smallskip

In the case of $\S^{n+2}_{p+1}$, the Gauss and the Ricci equations become
$$\left. \begin{array}{l}  \<R(X,Y)Z,W \>  + \<h(X,Z) ,h(Y,W) \> - \< h(X,W) ,h(Y,Z)\> = \<X,Z\>\<Y,W\>-\<X,W\>\<Y,Z\>\\
\\
 \< R^\perp (X,Y) \nu ,\xi \> - \< [A_{\nu} , A_{\xi} ] X,Y\>=0.\end{array} \right.$$
Again, if $h$  is collinear to $\nu$,  the terms $\<h(X,Z) ,h(Y,W) \>$ and $\< h(X,W) ,h(Y,Z)\>$ vanish. It follows that the scalar curvature 
of the induced metric is constant and equal to~$1.$ Analogously, the  fact that $h$ is collinear to $\nu$ implies the vanishing of $A_{\nu}$ and therefore the flatness of the normal bundle.


\section{Parametrizations by hypersurfaces (proof of Theorem \ref{zero})} \label{s3}

\subsection{The flat case }

Let ${\varphi}=(\psi,\tau)$ be an immersion of a $n$-dimensional manifold $\M$ into $ \R^{n+2}_{p+1} $ whose induced metric
$\tilde{g}:={\varphi}^*\<.,.\>$ has signature $(p,q)$.  In particular the induced metric on the normal space of  $\bar{\varphi}$ is Lorentzian. Let $\tilde{ \nu}$ be one of the two normalized, null
normal fields along ${\varphi}.$  
Since the discussion is local, there is no loss of generality to assume that, modulo congruence, its last component ${\nu}_{n+3}$  does not vanish, so that we may normalize $\tilde{\nu}=(\nu,1)$.

\begin{lemm} We set  $\si:=\psi - \tau \nu.$ 
 Then the map $(\si, \nu) : \M \to \R^{n+1}_{p+1} \times \S^n_p$ is an immersion. 
\end{lemm}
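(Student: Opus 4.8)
The plan is to prove directly that $d(\si,\nu)$ has trivial kernel at every point of $\M$. Concretely, fix $m\in\M$ and $X\in T_m\M$ with $d\si(X)=0$ and $d\nu(X)=0$; the goal is to deduce $X=0$, since $d(\si,\nu)(X)=\bigl(d\si(X),d\nu(X)\bigr)$ vanishes exactly when both components do. Three facts are available at this stage: $\varphi=(\psi,\tau)$ is an immersion whose induced metric $\tilde g$ is non-degenerate; the normal field is written $\tilde\nu=(\nu,1)$; and $\tilde\nu$ is null and normal to $\varphi$. From the null condition, $0=\langle\tilde\nu,\tilde\nu\rangle=\langle\nu,\nu\rangle-1$ shows that $\nu$ is indeed $\S^n_p$-valued, so the statement is well-posed; here I use that in $\R^{n+2}_{p+1}=\R^{n+1}_{p+1}\times\R$ the last coordinate contributes with a minus sign. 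From normality, $0=\langle d\varphi(Z),\tilde\nu\rangle=\langle d\psi(Z),\nu\rangle-d\tau(Z)$ gives the key identity $\langle d\psi(Z),\nu\rangle=d\tau(Z)$ for all $Z\in T_m\M$.

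Next I would unwind the hypotheses on $X$. Since $d\si=d\psi-d\tau\,\nu-\tau\,d\nu$, the assumptions $d\si(X)=0$ and $d\nu(X)=0$ together give $d\psi(X)=d\tau(X)\,\nu$. Substituting this into the product decomposition $\tilde g(X,Z)=\langle d\psi(X),d\psi(Z)\rangle-d\tau(X)\,d\tau(Z)$ of the ambient metric and then invoking the normality identity $\langle\nu,d\psi(Z)\rangle=d\tau(Z)$, I obtain, for every $Z\in T_m\M$,
$$\tilde g(X,Z)=\langle d\psi(X),d\psi(Z)\rangle-d\tau(X)\,d\tau(Z)=d\tau(X)\,\langle\nu,d\psi(Z)\rangle-d\tau(X)\,d\tau(Z)=0 .$$
Since $\tilde g$ is non-degenerate, this forces $X=0$, which is exactly the claim.

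I do not expect a genuine obstacle: the argument is a short linear-algebra manipulation resting on the normality of $\tilde\nu$ and the non-degeneracy of $\tilde g$. The only points requiring attention are the sign bookkeeping in the splitting $\langle\cdot,\cdot\rangle_{\R^{n+2}_{p+1}}=\langle\cdot,\cdot\rangle_{\R^{n+1}_{p+1}}-(dx_{n+2})^2$ (which is what turns "$\tilde\nu=(\nu,1)$ null" into "$\nu\in\S^n_p$") and the observation that one must use \emph{both} $d\si(X)=0$ and $d\nu(X)=0$: dropping the second leaves an extra term $\tau\,\langle d\nu(X),d\psi(Z)\rangle$ in the computation above, which has no reason to vanish, so $\si$ alone need not be an immersion.
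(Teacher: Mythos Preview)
Your proof is correct and follows essentially the same route as the paper. Both arguments start from $d\si(X)=0$ and $d\nu(X)=0$, use $d\psi=d\si+\tau\,d\nu+d\tau\,\nu$ to deduce $d\psi(X)=d\tau(X)\,\nu$, and then invoke non-degeneracy of $\tilde g$; the paper simply packages the last step by noting that $d\varphi(X)=d\tau(X)\,\tilde\nu$ is both tangent and normal to $\varphi$, which is your identity $\tilde g(X,Z)=0$ stated geometrically.
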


\begin{proof}
Suppose $(\varphi,\nu)$ is not an immersion, so that there exists a non-vanishing vector $v \in T\M$ such that $(d\varphi(v),d\nu(v))=(0,0).$
Since we have $d\psi=d\si + \tau d\nu + d\tau \nu,$
it follows that 
$$d \varphi(v) =(d\psi(v), d\tau(v))= (d\tau(v) \nu , d\tau(v))= d\tau(v) \tilde{\nu},$$
which is a normal to $\varphi$. However the immersion ${\varphi}$ is pseudo-Riemannian and therefore a vector cannot be tangent and normal at the same time, so we get the required contradiction. \end{proof}

\begin{lemm} \label{l2} We have the following relation:
$$\<d\si, \nu\>=0.$$
\end{lemm}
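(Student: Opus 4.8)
The plan is to compute the differential $d\si$ of the map $\si = \psi - \tau\nu$ and pair it against $\nu$, exploiting the two facts we have at our disposal: that $\tilde\nu = (\nu,1)$ is normal to the immersion ${\varphi} = (\psi,\tau)$, and that $\nu$ is $\S^n_p$-valued, i.e.\ $\<\nu,\nu\> = 1$. First I would record the identity already used in the previous lemma,
$$ d\si = d\psi - \tau\, d\nu - d\tau\, \nu, $$
and apply $\<\,\cdot\,,\nu\>$ to it. The term $\<d\tau\,\nu,\nu\> = d\tau\,\<\nu,\nu\> = d\tau$ since $\nu$ is unit. For the term $\<\tau\,d\nu,\nu\>$, differentiating $\<\nu,\nu\>=1$ gives $\<d\nu,\nu\>=0$, so this term vanishes. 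It remains to handle $\<d\psi,\nu\>$.

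For that last term I would use that $\tilde\nu=(\nu,1)$ is normal to ${\varphi}=(\psi,\tau)$: by definition of the ambient inner product on $\R^{n+2}_{p+1} = \R^{n+1}_{p+1}\times\R$ (with the last coordinate contributing a minus sign, since the signature is $(p+1,q+1)$ and the $\R^{n+1}_{p+1}$-factor already carries $(p+1,q)$), normality reads
$$ 0 = \<d{\varphi},\tilde\nu\> = \<d\psi,\nu\> - d\tau, $$
hence $\<d\psi,\nu\> = d\tau$. Substituting the three computations back yields $\<d\si,\nu\> = d\tau - 0 - d\tau = 0$, as claimed. The argument in the spherical case $\S^{n+1}_{p+1}\subset\R^{n+2}_{p+2}\times\R$ is formally identical, with the extra relation $\<\psi,\psi\>=1$ playing no role in this particular identity.

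The only genuinely delicate point is bookkeeping of signs: one must be careful that the splitting $\R^{n+2}_{p+1}=\R^{n+1}_{p+1}\times\R$ puts the Lorentzian (negative) direction into the last factor, so that $\<(a,s),(b,t)\> = \<a,b\>_{\R^{n+1}_{p+1}} - st$, and correspondingly that $\<\nu,\nu\>_{\R^{n+1}_{p+1}} = 1$ is consistent with $\tilde\nu=(\nu,1)$ being null in $\R^{n+2}_{p+1}$. Once the sign conventions are pinned down, the computation is a one-line manipulation and there is no real obstacle; everything needed has already been established in the two preceding lemmas and in the normalization $\tilde\nu=(\nu,1)$.
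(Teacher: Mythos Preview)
Your proof is correct and is essentially the same computation as the paper's: both use $d\psi = d\si + \tau\,d\nu + d\tau\,\nu$, the normality $\<d\varphi,\tilde\nu\>=\<d\psi,\nu\>-d\tau=0$, and $\<d\nu,\nu\>=0$ to conclude. The paper simply chains these into one line starting from $0=\<d\varphi,\tilde\nu\>$, whereas you expand $\<d\si,\nu\>$ term by term; the content is identical.
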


\begin{proof} Using again that $d\psi= d\si + \tau d\nu + d\tau \nu$ and observing 
 that $\<\nu, d\nu\>=0,$ we have 
$$ 0 =  \<d{\varphi},\tilde{\nu}\> = \<(d\psi, d\tau),(\nu, 1)\>= \<d\psi , \nu \> - d\tau= \<d\si, \nu\>.$$

\end{proof}

 \begin{lemm} \label{translation} Given $\eps >0,$ there exists $t_0 \in (-\eps, \eps)$ such that $\si + t_0 \nu$ is an immersion, and $\nu$ is its Gauss map.
  \end{lemm}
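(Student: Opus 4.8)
The plan is to understand how the "Gauss map" property behaves under the translation $\si \mapsto \si_t := \si + t\nu$ and to show that for generic small $t$ the map $\si_t$ is an immersion with $\nu$ still serving as its Gauss map. First I would record what we already know: by the previous lemma $(\si,\nu)$ is an immersion, and by Lemma \ref{l2} we have $\<d\si,\nu\>=0$; since $\nu$ is $\S^n_p$-valued we also have $\<\nu,\nu\>=1$ and $\<d\nu,\nu\>=0$. Differentiating $\si_t$ gives $d\si_t = d\si + t\,d\nu$, so that $\<d\si_t,\nu\>=0$ automatically — i.e. $\nu$ is always normal to $\si_t$ wherever $\si_t$ is an immersion, and because $\nu$ lives in the unit hyperquadric it is then exactly the (a) Gauss map of the hypersurface $\si_t$ in $\R^{n+1}_{p+1}$. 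So the only real content is: for some $t_0\in(-\eps,\eps)$, $\si_{t_0}$ is an immersion, i.e. $d\si + t_0\,d\nu$ has rank $n$ everywhere (or at least at the point of interest, the statement being local).

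Next I would set up the rank condition. Fix a point and a tangent frame $(e_1,\dots,e_n)$; the failure of $\si_t$ to be an immersion at that point means there is $v\neq 0$ with $d\si(v) + t\,d\nu(v)=0$. Now I invoke the fact that $(\si,\nu)$ is an immersion: if $d\si(v)=0$ and $d\nu(v)=0$ simultaneously, then $v=0$. Hence for any $v$ in the (finite-dimensional) set of "bad directions," $d\nu(v)\neq 0$, and $t$ is forced to be an eigenvalue-type quantity — more precisely $t$ is constrained by the relation $d\si(v)=-t\,d\nu(v)$, which can hold for at most finitely many values of $t$ at a given point (this is essentially the statement that $-d\nu = A$ is, up to the isomorphism $d\si$, a shape operator, so the relevant $t$'s are among the reciprocals of its principal curvatures / generalized eigenvalues). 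Since there are only finitely many such forbidden $t$, the interval $(-\eps,\eps)$ contains a $t_0$ avoiding all of them, so $\si_{t_0}$ is an immersion near the point. Combined with the first paragraph, $\nu$ is its Gauss map.

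The main obstacle I anticipate is making the "at most finitely many bad $t$" claim precise uniformly near the point, since the induced metric is indefinite and $d\nu$ (equivalently the shape operator $A$) need not be diagonalizable, so one cannot simply quote "eigenvalues of $A$." The clean way around this is to work at a single point (legitimate, as the conclusion is local and one only needs $\si_{t_0}$ to be an immersion on a neighborhood, which follows from non-vanishing of the relevant Jacobian at the point by continuity): consider the polynomial $t\mapsto \det$ of the matrix of $d\si_t$ in suitable bases of $T\M$ and a complement, or more invariantly note that $\si_t$ fails to be an immersion at the base point precisely when $t$ is a root of a certain nonzero polynomial (nonzero because $\si_0=\si$ is an immersion there, so $t=0$ is not a root), hence for all but finitely many $t$. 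Then pick $t_0$ in $(-\eps,\eps)$ away from those roots and away from $0$ is not even needed; continuity extends the immersion property to a neighborhood, and the normality computation from the first paragraph finishes the proof.
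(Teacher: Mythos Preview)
Your Gauss map argument is correct and matches the paper's: from $\<d\si,\nu\>=0$ and $\<d\nu,\nu\>=0$ one gets $\<d\si_t,\nu\>=0$, so $\nu$ is the unit normal of $\si_t$ whenever the latter is an immersion. The gap is in the immersion part. You justify that the polynomial $t\mapsto\det(d\si+t\,d\nu)$ is not identically zero by asserting ``$\si_0=\si$ is an immersion there, so $t=0$ is not a root.'' But $\si$ is \emph{not} assumed to be an immersion --- the lemma exists precisely to remedy the case where it is not (see the paragraph immediately following the lemma in the paper: ``Lemma~\ref{translation} shows that there is no loss of generality in assuming that $\si$ is an immersion''). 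So this justification is circular. The same defect undermines your earlier ``eigenvalue-type'' heuristic: writing the bad $t$'s as reciprocals of principal curvatures of $-d\si^{-1}\circ d\nu$ presupposes that $d\si$ is invertible.

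The paper argues instead via kernels: for distinct $t,t'$, if $v\in Ker(d\si+t\,d\nu)\cap Ker(d\si+t'\,d\nu)$ then subtracting gives $(t-t')\,d\nu(v)=0$, hence $d\nu(v)=0$, hence $d\si(v)=0$, hence $v=0$ since $(\si,\nu)$ is an immersion; from this the paper concludes there are at most $n$ bad values of $t$. If you prefer to rescue your determinant approach, you need a non-circular reason the pencil is regular. One clean fix uses the original immersion $\varphi$: at the base point $x_0$, if $(d\si+\tau(x_0)\,d\nu)(v)=0$ for some $v\neq 0$ then $d\varphi(v)=d\tau(v)\,\tilde\nu$ is simultaneously tangent and normal to a non-degenerate submanifold, hence zero, contradicting that $\varphi$ is an immersion. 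Thus $t=\tau(x_0)$ already makes $d\si+t\,d\nu$ injective, and your polynomial is not identically zero.
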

 
\begin{proof}

The claim follows from the fact that the set
$ \{ t \in \R | \,  \si + t \nu \mbox {  is not an immersion} \}$ contains at most $n$ elements.
To see this, observe that given a pair of distinct real numbers $(t,t')$, we have
$$Ker (d\si + t d \nu) \cap Ker  (d\si + t' d \nu) = \{ 0\}$$ 
(otherwise we would have a contradiction with the fact that $(\si,\nu)$ is an immersion).
Hence there cannot be more than $n$ distinct values $t$ such that $Ker (d\si + t d \nu) \neq \{ 0\}.$
The fact that $\nu$ is the Gauss map of $\si + t_0 \nu$ comes from Lemma \ref{l2}:
$$\<d(\si + t_0 \nu),\nu\>=\< d\si,\nu\> + t_0\<d \nu,\nu\>=0.$$
\end{proof}

Lemma \ref{translation} shows that there is no loss of generality in assuming that $\si$ is an immersion: if it is not the case, we may
 translate the immersion $\varphi$ along the vertical direction, setting ${\varphi}_{t_0}:={\varphi} - (0,t_0).$
Of course $\varphi$ is marginally trapped if and only if $\varphi_{t_0}$ is so, and moreover
the vector field $\tilde{\nu}$ is still normal to ${\varphi}_{t_0}.$
Finally, observe that the map $\si_{t_0}: \M \to \R^{n+1}_{p+1}$ associated to ${\varphi}_{t_0}$ is 
$$\si_{t_0} =\psi - (\tau - {t_0}) \nu = \psi - \tau \nu + t_0 \nu = \si + t_0 \nu,$$
hence an immersion.

We now describe the first fundamental form of $\varphi$ and its second fundamental form with respect to $\tilde{ \nu}$, both in terms of the
geometry of the immersion $\si$:

\begin{lemm} \label{geozero}
Denote by $g:=\si^* \<.,.\>$ the metric induced on $\M$ by $\si$ and $A$ the shape operator associated to $ \nu$.

Then the metric 
$\tilde{g}:=\varphi^\ast \<.,.\>$ induced on  $\M$ by $\varphi$ is given by the formula
$$\tilde{g}=g(.,.)-2 \tau g(A.,.)+ \tau^2 g(A.,A.).$$
In particular, the non-degeneracy assumption on $\tilde{g}$ implies that $\tau^{-1}$ is not equal to any principal curvature of $\varphi.$
Moreover,  the second fundamental form of $\varphi$ with respect to $\tilde{ \nu}$ is given by
$$\tilde{h}_{\tilde{ \nu}}:= \<\tilde{h}(.,.), \tilde{\nu}\> = g(.,A.)- \tau  g(A.,A.).$$
\end{lemm}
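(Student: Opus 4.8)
The plan is to differentiate the defining relation $\psi = \si + \tau\nu$ and substitute everywhere. Since $\nu$ is the Gauss map of $\si$ and $A = -d\nu$, one has $d\nu(X) = -d\si(AX)$, hence $d\psi(X) = d\si\big((Id-\tau A)X\big) + d\tau(X)\,\nu$ and
\[
 d\varphi(X) = \Big( d\si\big((Id-\tau A)X\big) + d\tau(X)\,\nu,\ d\tau(X)\Big) \in \R^{n+1}_{p+1} \times \R .
\]
At this point I would record that in the splitting $\R^{n+2}_{p+1} = \R^{n+1}_{p+1} \times \R$ the extra $\R$ factor is negative-definite — the $p+1$ positive directions of the ambient metric already lie in $\R^{n+1}_{p+1}$ — so that $\<(a,s),(b,t)\> = \<a,b\> - st$, and also that $\<\tilde\nu,\tilde\nu\> = \<\nu,\nu\> - 1 = 0$, consistent with $\tilde\nu$ being null.

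For the first fundamental form I would just expand $\tilde g(X,Y) = \<d\varphi(X),d\varphi(Y)\>$. All the cross terms pairing $d\si(\cdot)$ with $\nu$ vanish by Lemma \ref{l2}, and the two surviving $d\tau(X)d\tau(Y)$ contributions — namely $d\tau(X)d\tau(Y)\<\nu,\nu\>$ and the $-d\tau(X)d\tau(Y)$ coming from the negative last coordinate — cancel because $\<\nu,\nu\> = 1$ ($\nu$ being $\S^n_p$-valued). What remains is $\tilde g(X,Y) = g\big((Id-\tau A)X,(Id-\tau A)Y\big)$, which, using that $A$ is $g$-self-adjoint, equals $g(X,Y) - 2\tau g(AX,Y) + \tau^2 g(AX,AY)$. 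For the non-degeneracy remark: $g$ being non-degenerate, $\tilde g$ is non-degenerate precisely when $Id - \tau A$ is invertible, i.e. when $\tau^{-1}$ is not an eigenvalue of $A$, that is, not a principal curvature of $\si$.

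For the second fundamental form, the ambient space being flat, $D_X(d\varphi(Y))$ is the ordinary second derivative of $\varphi$, and pairing it with the normal field $\tilde\nu$ produces $\tilde h_{\tilde\nu}(X,Y)$; differentiating $\<\tilde\nu,d\varphi(Y)\> = 0$ along $X$ then gives the standard identity $\tilde h_{\tilde\nu}(X,Y) = -\<d\tilde\nu(X),d\varphi(Y)\>$. Since $\tilde\nu = (\nu,1)$ one has $d\tilde\nu(X) = (d\nu(X),0) = (-d\si(AX),0)$; pairing with $d\varphi(Y)$ and using once more $\<d\si(\cdot),\nu\> = 0$ leaves $-\<d\tilde\nu(X),d\varphi(Y)\> = g(AX,Y) - \tau g(AX,AY) = g(.,A.) - \tau g(A.,A.)$, which is the asserted formula.

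I do not expect a genuine obstacle here: once $d\varphi$ is written out, everything is elementary. The only points requiring care are the signature bookkeeping of the product $\R^{n+2}_{p+1} = \R^{n+1}_{p+1} \times \R$ — it is exactly the normalization $\<\nu,\nu\> = 1$ of the Gauss map that makes the $d\tau\otimes d\tau$ terms disappear from $\tilde g$ — and keeping in mind that $A$ is $g$-self-adjoint, so that both $\tilde g$ and $\tilde h_{\tilde\nu}$ come out symmetric in $X,Y$.
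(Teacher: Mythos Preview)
Your argument is correct and is essentially the same as the paper's: both differentiate $\varphi=(\si+\tau\nu,\tau)$, use $\<d\si,\nu\>=\<d\nu,\nu\>=0$ to kill the cross terms, let the two $d\tau\,d\tau$ contributions cancel via $\<\nu,\nu\>=1$, and compute $\tilde h_{\tilde\nu}=-\<d\varphi,d\tilde\nu\>$ in the same way. The only cosmetic difference is that you first package $d\si+\tau\,d\nu$ as $d\si\circ(Id-\tau A)$ before expanding, whereas the paper expands term by term; you also (correctly) read the non-degeneracy clause as a condition on the principal curvatures of $\si$ rather than of $\varphi$.
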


\begin{proof}
Since $\<d\si, \nu\>=\<d \nu, \nu\>=0$, we have, given $v_1,v_2 \in T\M,$
\begin{eqnarray*}
\tilde{g}(v_1,v_2)&=&\<d\varphi(v_1),d\varphi(v_2)\>\\
&=&\<d\si(v_1),d\si(v_2)\> +\tau \<d\si(v_1),d \nu(v_2)\>+ \tau \<d \nu(v_1),d\si(v_2)\> \\
&&+\tau^2 \<d \nu(v_1),d \nu(v_2)\>
+ d \tau (v_1)d \tau (v_2)\< \nu, \nu\>- d \tau (v_1)d \tau (v_2)\\
&=&g(v_1,v_2)-\tau (g(v_1,Av_2)+g(Av_1,v_2))+ \tau^2 g(Av_1,Av_2)\\
&=& g(v_1,v_2)-2 \tau g(Av_1,v_2)+ \tau^2 g(Av_1,Av_2).
\end{eqnarray*}
We calculate the second fundamental form of $\varphi$ with respect to $\tilde{ \nu}=( \nu,1)$:
\begin{eqnarray*} \tilde{h}_{\tilde{\nu}}&=& -\< d\varphi,d\tilde{\nu}\> \\ &=&-\<d\si+ \tau  d \nu + d\tau  \nu,d \nu\>\\ 
&=&-\<d\si,d \nu\> - \tau  \<d \nu ,d \nu\>\\ &=& g(.,A.) - \tau g(A.,A.).
\end{eqnarray*}
\end{proof}
The proof of Theorem \ref{zero} follows easily: denoting by  $\tilde{A}_{\tilde{\nu}}$ the shape operator of ${\varphi}$ with respect to $\tilde{\nu},$ we have, 
 from Lemma \ref{geozero}:
$$ g( \tilde{A}_{\tilde{\nu}} (  Id - \tau A).,  ( Id - \tau A).) = g(., (Id - \tau A). ).$$
It follows that 
$$ \tilde{A}_{\tilde{\nu}}:= ( Id - \tau A)^{-1}$$ and
that $\vec{H}$ is collinear to $\tilde{\nu}$ if and only if $\tilde{A}_{\tilde{\nu}}$ is trace-free, i.e.\ $\tau$ is the root of the polynomial 
$P(\tau) = tr ( (  Id -\tau  A)^{-1} ).$

\begin{rema} If $\varphi$ is minimal, $\tau=0$ is a root of $P(\tau)$. The corresponding immersion $\varphi=(\si,0)$ is not only marginally trapped, but minimal.
\end{rema}

\subsection{The $\S^{n+2}_{p+1}$ case}

Let $\varphi =(\psi, \tau): \M \to \S^{n+2}_{p+1}$ an immersion such that the induced metric
$\tilde{g}:= \varphi^\ast \<.,.\>$ has signature $(p,q)$.
Let $\tilde{ \nu}$ be one of the two normalized, null
normal fields along $\varphi.$  Since the discussion is local, there is no loss of generality to assume that, modulo congruence, its last component $\nu_{n+3}$  does not vanish, so that we may normalize $\tilde{\nu}=(\nu,1)$.

 We define the  \em null projection \em of $\varphi$ to be $\si:= \psi - {\tau} \nu.$
The fact that $(\nu,1) \in T_{\varphi} \S^{n+2}_{p+1},$ i.e.\
$ 0 = \< (\psi, \tau),(\nu, 1)\> = \<\psi, \nu\> -\tau$, implies that $\<\psi, \nu\> =\tau.$
Hence
\begin{eqnarray*}
\<\si, \si\> &=& \<\psi,\psi\> -2 {\tau}\<\psi, \nu\> + \tau^2 \<\nu,\nu\> \\
&=& \<\psi,\psi\> -\tau^2\\
&=& \<\varphi,\varphi\>\\
&=& 1 ,
\end{eqnarray*}
which shows that $\si$ is $\S^{n+1}_p$-valued.
The proofs of the next two lemmas are omitted, since they are similar to the flat case:

\begin{lemm}\label{leg}
 The map $(\si, \nu) : \M \to \S^{n+1}_{p+1} \times \S^{n+1}_{p+1}$ is an immersion. 
\end{lemm}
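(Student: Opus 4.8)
The plan is to repeat the argument given for the flat case verbatim, the only change being that everything now takes place inside the hyperquadric $\S^{n+2}_{p+1}$. I argue by contradiction: if $(\si,\nu)$ fails to be an immersion at some point of $\M$, there is a non-vanishing $v\in T\M$ with $d\si(v)=0$ and $d\nu(v)=0$. Differentiating the identity $\psi=\si+\tau\nu$ gives $d\psi=d\si+\tau\,d\nu+d\tau\,\nu$, whence
$$ d\varphi(v)=\big(d\psi(v),\,d\tau(v)\big)=\big(d\tau(v)\,\nu,\,d\tau(v)\big)=d\tau(v)\,\tilde\nu. $$

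The next step is to recognize $\tilde\nu=(\nu,1)$ as a normal vector of the immersion $\varphi$ into $\S^{n+2}_{p+1}$. It is tangent to $\S^{n+2}_{p+1}$ at $\varphi$ — this is exactly the relation $\<\psi,\nu\>=\tau$ recorded just above — and it is normal to the submanifold $\varphi(\M)$ inside $\S^{n+2}_{p+1}$ by construction of the null normal frame. Since $\varphi$ is an immersion and $v\neq0$, we have $d\varphi(v)\neq0$, so $d\tau(v)\neq0$ and $d\varphi(v)$ is a non-zero vector that is simultaneously tangent and normal to $\varphi$. An immersion with non-degenerate induced metric $\tilde g=\varphi^\ast\<.,.\>$ admits no such vector, so this is the required contradiction, and $(\si,\nu)$ is an immersion.

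I do not anticipate any genuine obstacle: the relation $d\varphi(v)=d\tau(v)\,\tilde\nu$ is purely algebraic, and the contradiction is obtained from non-degeneracy exactly as in Section \ref{s3}. The only point deserving a word of care is that one must first recognize $\tilde\nu$ as a tangent vector of $\S^{n+2}_{p+1}$ at $\varphi$ — which is precisely the relation $\<\psi,\nu\>=\tau$ already established — before it may be regarded as a normal vector of $\varphi(\M)$; with that observation the $\S^{n+2}_{p+1}$ case reduces to the flat one. (The same scheme also yields the companion statements $\<d\si,\nu\>=0$ and the existence of a translate making $\si$ an immersion, by copying Lemmas \ref{l2} and \ref{translation}.)
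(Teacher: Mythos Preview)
Your proof of the lemma is correct and follows exactly the paper's intended approach: the paper omits the proof precisely because it is ``similar to the flat case,'' and your argument reproduces that flat-case argument with the additional (and appropriate) remark that $\tilde\nu\in T_\varphi\S^{n+2}_{p+1}$ via $\langle\psi,\nu\rangle=\tau$. One caveat on your closing parenthetical: in $\S^{n+2}_{p+1}$ there is no vertical translation, so Lemma~\ref{translation} cannot be copied verbatim; the paper explicitly notes this and instead uses a small hyperbolic rotation $R^\alpha\in SO(p+2,q+1)$ to obtain the analogous perturbation making $\sigma$ an immersion.
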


\begin{lemm} The following relations hold:
$$ \<\si, \nu\>=0 \quad \mbox{ and } \quad
\<d\si, \nu\>=0.$$
\end{lemm}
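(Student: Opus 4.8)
The plan is to reduce everything to three elementary facts that are already in place. First, $\tilde\nu = (\nu,1)$ is null in $\R^{n+3}_{p+2} = \R^{n+2}_{p+2}\times\R$, whose last coordinate is timelike; hence $0 = \<\tilde\nu,\tilde\nu\> = \<\nu,\nu\> - 1$, so $\<\nu,\nu\> = 1$ (i.e.\ $\nu$ is $\S^{n+1}_{p+1}$-valued), and differentiating gives $\<d\nu,\nu\> = 0$. Second, the tangency relation $\<\tilde\nu,\varphi\> = 0$ gives $\<\psi,\nu\> = \tau$, as computed just above the statement. Third, the normality of $\tilde\nu$ along $\varphi$ gives $\<d\varphi,\tilde\nu\> = 0$, which written out reads $\<d\psi,\nu\> - d\tau = 0$, i.e.\ $\<d\psi,\nu\> = d\tau$.

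Granting these, the proof is a two-line computation parallel to Lemma \ref{l2}. For the first relation I would expand $\si = \psi - \tau\nu$ and substitute:
$$\<\si,\nu\> = \<\psi,\nu\> - \tau\<\nu,\nu\> = \tau - \tau = 0.$$
For the second I would differentiate $\si = \psi - \tau\nu$, pair with $\nu$, and use $\<\nu,\nu\> = 1$ together with $\<d\nu,\nu\> = 0$:
$$\<d\si,\nu\> = \<d\psi,\nu\> - d\tau\,\<\nu,\nu\> - \tau\<d\nu,\nu\> = \<d\psi,\nu\> - d\tau = 0.$$

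No genuine obstacle arises; the statement is essentially bookkeeping. The one point deserving attention is that, unlike the flat case, the hyperquadric constraint $\<\varphi,\varphi\> = 1$ is what forces the normalization and hence the extra relation $\<\si,\nu\> = 0$ --- in $\R^{n+2}_{p+1}$ the analogue of $\si$ may be translated off the hyperplane $\nu^\perp$, which is why Lemma \ref{l2} records only $\<d\si,\nu\> = 0$. Keeping the signature of the ambient splitting $\R^{n+3}_{p+2} = \R^{n+2}_{p+2}\times\R$ straight is the only thing one must be careful about.
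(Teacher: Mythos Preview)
Your proof is correct and is precisely the argument the paper has in mind when it writes that the proofs ``are omitted, since they are similar to the flat case'': you use the tangency relation $\<\varphi,\tilde\nu\>=0$ to get $\<\psi,\nu\>=\tau$ for the first identity, and the normality $\<d\varphi,\tilde\nu\>=0$ together with $\<\nu,\nu\>=1$, $\<d\nu,\nu\>=0$ for the second, exactly as in Lemma~\ref{l2}. Your closing remark explaining why the extra relation $\<\si,\nu\>=0$ appears here but not in the flat case is also accurate.
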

Unlike in the flat case, there is no vertical translation in $\S^{n+2}_{p+1}$.  
We may however, up to a arbitrarily small  pertubation, assume that $\si$ is an immersion.

\begin{lemm} Given $\eps >0,$ there exists $\alpha \in (-\eps,\eps)$ and a hyperbolic rotation $R^{\al}$ of angle $\alpha$ such that the null projection $\si^{\al}$ of $\varphi^{\al} := R^{\al} \varphi$ is an immersion.
  \end{lemm}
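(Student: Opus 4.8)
The plan is to mimic the vertical-translation argument used in the flat case (Lemma \ref{translation}), replacing the one-parameter group of vertical translations of $\R^{n+2}_{p+1}$ by a one-parameter group of hyperbolic rotations $R^{\al}$ of $\R^{n+3}_{p+2}$ preserving $\S^{n+2}_{p+1}$ and acting appropriately in the last two coordinates. First I would fix such a one-parameter subgroup: since $\tilde\nu=(\nu,1)$ has been normalized so that its last component does not vanish, I choose the hyperbolic rotation $R^{\al}$ acting in the timelike--``extra'' plane spanned by the last coordinate $x_{n+3}$ and a chosen coordinate direction, so that $R^{\al}$ fixes $\S^{n+2}_{p+1}$ and varies the component of $\varphi$ along $\tilde\nu$. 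I would then record how the null projection transforms: writing $\varphi^{\al}=R^{\al}\varphi=(\psi^{\al},\tau^{\al})$, a direct computation shows that $\si^{\al}:=\psi^{\al}-\tau^{\al}\nu^{\al}$ differs from $\si$ by adding a multiple of $\nu$, i.e.\ $\si^{\al}=\si+c(\al)\,\nu$ for a smooth function $c$ with $c(0)=0$ and $c'(0)\neq 0$ — exactly as in the flat case where $\si_{t_0}=\si+t_0\nu$.

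The key step is then the same finiteness argument as in Lemma \ref{translation}. Using that $(\si,\nu)$ is an immersion (Lemma \ref{leg}), I observe that for distinct reals $t,t'$ one has $\mathrm{Ker}(d\si+t\,d\nu)\cap\mathrm{Ker}(d\si+t'\,d\nu)=\{0\}$, since a common kernel vector would lie in $\mathrm{Ker}(d\si)\cap\mathrm{Ker}(d\nu)$, contradicting that $(\si,\nu)$ is an immersion. Hence $\si+t\nu$ fails to be an immersion for at most $n$ values of $t$. Because $\al\mapsto c(\al)$ is a local diffeomorphism near $0$, the set of $\al\in(-\eps,\eps)$ for which $\si^{\al}=\si+c(\al)\nu$ fails to be an immersion is finite, so we can pick $\al\in(-\eps,\eps)$ with $\si^{\al}$ an immersion; replacing $\varphi$ by $\varphi^{\al}$ (which is congruent to $\varphi$, hence marginally trapped iff $\varphi$ is) gives the claim.

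The main obstacle — really the only point requiring care — is the explicit construction of the hyperbolic rotation and verifying the two properties of $c(\al)$: that $R^{\al}$ genuinely preserves $\S^{n+2}_{p+1}$ and maps $\varphi$ to another immersion with non-degenerate induced metric of the same signature (immediate, since $R^{\al}$ is an ambient isometry), and that the induced effect on the null projection is precisely a shift $\si\mapsto\si+c(\al)\nu$ with $c'(0)\neq0$. For the latter I would use the defining relation $\<\psi,\nu\>=\tau$ together with the explicit matrix of $R^{\al}$ acting on the last two coordinates to compute $\psi^{\al}$ and $\tau^{\al}$ to first order in $\al$, much as the flat-case computation $\si_{t_0}=\psi-(\tau-t_0)\nu$ was carried out; the normalization $\tilde\nu=(\nu,1)$ with nonvanishing last component is exactly what guarantees the derivative $c'(0)$ is nonzero. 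Once this is in place, the rest is verbatim the argument of Lemma \ref{translation}, and I would simply say so rather than rewrite it.
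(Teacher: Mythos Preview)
Your plan has a real gap at precisely the step you flag as ``the only point requiring care''. After applying a hyperbolic rotation $R^{\al}$ in, say, the $(x_1,x_{n+3})$-plane, the rotated null normal $R^{\al}\tilde\nu$ is no longer normalized: its last component becomes $\cosh\al+\sinh\al\,\nu_1$, so one must renormalize before forming the null projection. Carrying this out (and pulling back by $R^{-\al}$, which does not affect whether the map is an immersion) yields
\[
R^{-\al}\si^{\al}\;=\;\si+\Bigl(\tau-\tfrac{\tau^{\al}}{\nu^{\al}_{n+3}}\Bigr)\nu
\;=\;\si+\bigl(\si_1\,\al+o(\al)\bigr)\,\nu,
\]
where $\si_1$ is the first coordinate of $\si$. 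Thus the shift is \emph{not} a scalar function of $\al$: it depends on the point of $\M$ through $\si_1$, and its $\al$-derivative at $0$ equals $\si_1(x)$, which may well vanish. Your assertion that ``the normalization $\tilde\nu=(\nu,1)$ with nonvanishing last component is exactly what guarantees $c'(0)\neq0$'' is therefore incorrect, and without $c'(0)\neq0$ you cannot conclude that $\al\mapsto c(\al)$ is a local diffeomorphism, so the finiteness argument from Lemma~\ref{translation} does not transfer.

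The paper's actual proof closes this gap by a contradiction argument that also lets the rotation axis vary. If for the axis $j=1$ the map $\si^{\al}$ fails to be an immersion for \emph{every} small $\al$, one expands to first order and uses that $(\si,\nu)$ is an immersion (so $d\si(v)$ and $d\nu(v)$ cannot vanish simultaneously) to force $\si_1=0$. Running the same argument with rotations in the other coordinate planes would then force every coordinate of $\si$ to vanish, contradicting $\<\si,\si\>=1$. Hence for at least one axis there exists $\al\in(-\eps,\eps)$ with $\si^{\al}$ an immersion. Your write-up is missing this freedom in the choice of axis, which is exactly what compensates for the possible vanishing of $\si_j$.
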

 
\begin{proof}
Set
$$R^{\al}=\left(\begin{array}{ccc} \cosh \al &  & \sinh \al \\  & Id & \\ \sinh \al &&  \cosh \al \end{array} \right) 
 \in SO(p+2,q+1)$$
and $\varphi^\al := R^{\al} \varphi,$ $\tilde{\nu}^\al:= R^{\al} \tilde{\nu}.$
 Observe that $\tilde{\nu}^{\al}:=(\nu^\al, \nu_{n+3}^\al)$ is not anymore normalized \em a priori, \em since its last component $\nu^{\al}_{n+3} $ is
equal to $\cosh (\al) + \sinh (\al) \nu_1,$ where $\nu_1$ is the first component of the vector $\tilde{\nu}.$

Nevertheless, the null geodesic passing through the point $\varphi^\al$ and directed by the vector $\tilde{\nu}^{\al}$
crosses the slice $d\S^{n+2}_{p+2} \cap \{  x_{n+3}=0\}$ at the point
$$(\si^\al,0):= \left(\psi^\al  - \frac{\tau^\al}{\nu_{n+3}^\al} \nu^\al,0 \right).$$
Clearly, $\si^\al$ is an immersion
 if and only if $R^{-\al}\si^{\al}=\psi - \frac{\tau^\al}{\nu_{n+3}^\al} \nu= \varphi +\left(\tau - \frac{\tau^\al}{\nu_{n+3}^\al}\right)\nu $ is so.  Observe that 
\begin{eqnarray*} 
\tau - \frac{\tau^\al}{\nu_{n+3}^\al}&=& \tau - \frac{\cosh (\al) \tau + \sinh (\al) \psi_1}{\cosh (\al)+ \sinh (\al ) \nu_1}\\
&=& \tanh (\al) (\psi_1 - \tau \nu_1 ) + o ( \al)\\
&=& \si_1 \al+ o ( \al).\end{eqnarray*}
Now, assume that  $R^{-\al}\si^\al$ is not an immersion for $\al \in (-\eps, \eps)$. Hence there exists 
a one-parameter family of unit tangent vectors $v^\al$ such that
$$0= d (R^{-\al}\si^\al)(v^\al) = d\si(v^\al) +  (d \si_1(v^\al) \nu +  \si_1 d\nu(v^\al) )\al + o(\al), 
\, \forall \al \in (-\eps, \eps).$$
Thus
$$ \left\{ \begin{array}{l} d\si (v^\al)=0\\
 d \si_1(v^\al) \nu +  \si_1 d\nu(v^\al) = \si_1 d\nu(v^{\al})=0.
\end{array} \right.$$
By Lemma \ref{leg}, $d\nu(v^{\al})$ and $d\si(v^\al)$ cannot vanish simultaneously, therefore $\si_1$ vanishes.
Repeating the argument with suitable rotations yields that all the other coordinates of $\si$ vanish, a contradiction since $\si \in \S^{n+1}_{p+1}.$
\end{proof}

By the previous lemma we may assume that $\si$ is an immersion. The remainder of the proof  follows the lines of that of the flat case,
in particular Lemma \ref{geozero} still holds.

\section{Parametrization by the mean Gauss map} \label{s4}

\subsection{The flat case (proof of Theorem \ref{one})}
In this section $\Sigma$ denotes a $n$-dimensional submanifold of $\R^{n+2}_{p+1}$ whose induced metric has signature $(p,q)$ and such that the normalized vector $\nu \in \S^p \times \S^q \subset  \R^{n+2}$ has rank $n.$ We may therefore parametrize  $\Sigma$ locally  by $\nu$, i.e. by a map  $\varphi: \Omega \to \R^{n+2}_{p+1}$, where $\Omega$
is an open subset of the universal covering of $\S^p \times \S^q.$
We set $\si(\nu):= \frac{1}{2}\<\varphi(\nu), \nu \>$ and $\tau (\nu):=\frac{1}{2} \<\varphi(\nu), \overline{\nu}\>.$\footnote{The pair $(\si,\tau)$ may be regarded as a generalization of the \em support function \em of a hypersurface.}
\begin{lemm}
$$\varphi = \tau \nu + \si \overline{\nu} + 2 \nabla \si,$$
where $\nabla \si$ is the gradient of $\si$ with respect to the induced metric on $\S^p \times \S^q$, i.e.\, 
$\nabla \si = (\nabla' \si, -\nabla'' \si)$, where $\nabla'$ and $\nabla''$ are the gradient on $\S^p$ and $\S^q$ respectively.
\end{lemm}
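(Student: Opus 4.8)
The plan is to write $\varphi$ in the pointwise decomposition of $\R^{n+2}$ adapted to $\nu$. First I would record the linear algebra: for $\nu=(\nu',\nu'')\in\S^p\times\S^q$ the vectors $\nu$ and $\overline{\nu}=(\nu',-\nu'')$ are both null, $\<\nu,\overline{\nu}\>=2$, and together they span the plane $\Pi:=\R\nu'\oplus\R\nu''$, which is non-degenerate (Lorentzian); hence $\R^{n+2}=\Pi\oplus\Pi^{\perp}$, and a one-line check shows $\Pi^{\perp}=(\nu')^{\perp}\times(\nu'')^{\perp}=T_{\nu}(\S^p\times\S^q)$, on which $\<.,.\>$ restricts to the non-degenerate metric $(v',v'')\mapsto |v'|^2-|v''|^2$ of signature $(p,q)$. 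Writing $\varphi=a\nu+b\overline{\nu}+w$ with $w\in\Pi^{\perp}$ and pairing successively with $\nu$ and $\overline{\nu}$ (using $\<w,\nu\>=\<w,\overline{\nu}\>=0$) gives $2b=\<\varphi,\nu\>$ and $2a=\<\varphi,\overline{\nu}\>$, that is $b=\si$ and $a=\tau$ for the functions defined in the statement. So the whole content of the lemma reduces to the identity $w=2\nabla\si$.

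Next I would differentiate $\si=\tfrac12\<\varphi,\nu\>$ on $\Omega$. The parametrizing variable being $\nu$ itself, one has $d\nu=\mathrm{id}$ under the identification $T_{\nu}\Omega\cong T_{\nu}(\S^p\times\S^q)$, whence for $v\in T_{\nu}(\S^p\times\S^q)$
\[ d\si(v)=\tfrac12\<d\varphi(v),\nu\>+\tfrac12\<\varphi,v\>.\]
The first term vanishes because $\nu$, being collinear to $\vec{H}$, is normal to $\Sigma$, so $\<d\varphi(v),\nu\>=0$. In the second term $\varphi=\tau\nu+\si\overline{\nu}+w$ with $\tau\nu+\si\overline{\nu}\in\Pi$ and $v\in\Pi^{\perp}$, so $\<\varphi,v\>=\<w,v\>$. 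Thus $d\si(v)=\tfrac12\<w,v\>$ for every tangent vector $v$.

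Finally I would compare with the gradient. By definition $\nabla\si$ is characterized by $d\si(v)=\<\nabla\si,v\>$ with respect to the metric on $\S^p\times\S^q$ induced from $\<.,.\>$ --- precisely the restricted metric computed in the first paragraph, for which the gradient splits as $\nabla\si=(\nabla'\si,-\nabla''\si)$. Subtracting the two expressions for $d\si(v)$ yields $\<w-2\nabla\si,v\>=0$ for all $v\in T_{\nu}(\S^p\times\S^q)$; since $w$ and $2\nabla\si$ both lie in this subspace and $\<.,.\>$ is non-degenerate on it, $w=2\nabla\si$. Combined with the first paragraph this is the asserted formula $\varphi=\tau\nu+\si\overline{\nu}+2\nabla\si$.

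None of the steps is a genuine obstacle, but two points deserve attention. The first is the bookkeeping identifying $\Pi^{\perp}$ with $T_{\nu}(\S^p\times\S^q)$, and the observation that $\overline{\nu}$ need \emph{not} be normal to $\Sigma$: only the normality of $\nu$ is used, as it must be, since the second null direction of $N\Sigma$ is entirely unconstrained. The second is keeping track that the ``gradient'' here is taken for the \emph{indefinite} metric inherited from $\R^{n+2}_{p+1}$; this is exactly what produces the sign in $\nabla\si=(\nabla'\si,-\nabla''\si)$ and what makes the concluding non-degeneracy argument --- passing from equality of linear forms on $T_{\nu}(\S^p\times\S^q)$ to equality of vectors --- valid.
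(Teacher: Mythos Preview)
Your proof is correct and follows essentially the same route as the paper: decompose $\varphi=\tau\nu+\si\overline{\nu}+V$ with $V\in T_\nu(\S^p\times\S^q)$, then use the normality condition $\<d\varphi,\nu\>=0$ together with $d\nu=\mathrm{id}$ to identify $V$ with $2\nabla\si$. The only cosmetic difference is that you differentiate $\si=\tfrac12\<\varphi,\nu\>$ directly and substitute the decomposition of $\varphi$ into $\<\varphi,v\>$, whereas the paper expands $d\varphi$ from the decomposition first and then pairs with $\nu$ (needing the auxiliary identity $\<dV,\nu\>=-\<V,d\nu\>$); your ordering is marginally more direct but the content is the same.
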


\begin{proof}
Since $\nu$ and $\overline{\nu}$ are null and $\<\nu, \overline{\nu}\>=2$, we clearly have
$ \varphi = \tau \nu + \si \overline{\nu} + V,$
where $V \in T_{\nu} (\S^p \times \S^q) = T_{\overline{\nu}} (\S^p \times \S^q )= T_{\nu'} \S^p \times T_{\nu''} \S^q.$
In order to determine $V$, we use the assumption $\<d\varphi, \nu\>=0$. Taking into account that
$$ d\varphi = d\tau \nu + \tau d\nu+ d\si \overline{\nu} + \si d \overline{\nu} + dV$$
and that  $\<\nu,\nu\>$, $\<d\nu,\nu\>$ and $ \<d\overline{\nu}, \nu\>$ vanish, we  have
$$ \<d\varphi, \nu\>=d\si \<\overline{\nu}, {\nu}\> + \< dV ,\nu\>=2 d\si + \< dV, \nu\>.$$
On the other hand, from $0=d(\<V, \nu\>)=\<dV, \nu\>+\<V, d\nu\>$,  we conclude, observing that $d\nu= Id,$
$$ \< V,W\>=\<V, d\nu(W)\>=-\<dV(W), \nu\> = 2d\si(W), \forall W \in T_{\nu}(\S^p\times \S^q),$$
which, by the very definition of the gradient, proves that $V = 2\nabla \si$.
\end{proof}

We now complete the proof of Theorem \ref{one}: define the endomorphism $A_\nu$ on $T_\nu (\S^p \times \S^q)$ by
$$\<A_\nu.,.\> = {h}_{\nu}.$$
Hence, using that $d\nu$ is the identity map of $T_\nu (\S^p \times \S^q)$, we have
$$ \< d\varphi \circ A_\nu .,d\varphi.\> =- \< d\nu.,d\varphi .\>=- \<\Pi., d\varphi .\>,$$
where $\Pi $ denotes the restriction to $ T_\nu (\S^p \times \S^q)$ of the normal projection  $\R^{n+2}_{p+1} \to T_{\varphi(\nu)}\s$.
It follows that
$$d\varphi \circ A_\nu = -\Pi$$
and therefore
$$ A_\nu^{-1} = -\Pi^{-1} \circ d\varphi$$
(the maximal rank assumption on $\nu$ implies that $\Pi$ is one-to-one).
In order to calculate the trace of $A_{\nu}$ we introduce an orthonormal basis $(e_1, \dots, e_n)$ of $T_\nu (\S^p \times \S^q)$, such that
$\<e_i,e_i\> =1$ if $1 \leq i \leq p$ and $\<e_i,e_i\>=-1$ if $p+1 \leq i \leq n.$
We define the coefficients $a_{ij}$ by 
$$ d\varphi(e_i) = \sum_{j=1}^n a_{ij} \Pi e_j.$$
Clearly $$ A_{\nu}^{-1} = [a_{ij}]_{1 \leq i,j \leq n}.$$
In order to determine explicitely the coefficients $a_{ij}$, we calculate
\begin{equation} \label{dphi} d \varphi = d\tau \, \nu  + d\tau \, \nu + d\si \, \overline{\nu} + \si \, d \overline{\nu}+ 2d \nabla \si. \end{equation}
Then, we introduce a null, normal vector field $\xi$ along $\s$  such that $(\nu,\xi)$ is a null frame of $N \Sigma = T \Sigma^{\perp}, $ which is in addition normalized, i.e.\ $\<\nu,\xi\>=2.$
Then the projection of a vector $V$ of $\R^{n+2}_{p+1}$ onto $N\Sigma$ is given by the formula:
$$ \frac{1}{2} \left( \< V, \xi\> \nu + \<V, \nu \> \xi \right).$$ 
It follows that

\begin{equation} \label{Pi} \Pi V = V - \frac{1}{2} \left( \< V, \xi\> \nu + \<V, \nu \> \xi \right). 
\end{equation}
For $1 \leq i \leq p$ we have, using Equation (\ref{dphi}) and observing that $d\nu(e_i)=d\overline{\nu}(e_i) =e_i$, 
$$d\varphi(e_i)= (\tau + \si) e_i + d\tau(e_i) \nu + d\si(e_i) \overline{\nu} + 2 d(\nabla \si)(e_i)$$ 
Using Equation (\ref{Pi}) and the fact that $  \< d(\nabla \si)(e_i),e_j\>  = Hess(\si)(e_i,e_j)$, we conclude that, for  $1 \leq i \leq p$,
$$ a_{ij}= \delta_{ij} (\tau+\si) +2 Hess(\si)(e_i,e_j).$$
Analogously we get, if $p+1 \leq i \leq n$,
$$ a_{ij}= \delta_{ij} (\tau-\si) +2 Hess(\si)(e_i,e_j).$$
The conclusion of the proof of Theorem \ref{one} follows easily.


\subsection{The case $(p,q)=(1,1)$ (proof of Corollaries \ref{coro1} and \ref{coro2})} \label{pq=11}
We use the natural identification $\R^4_2 \simeq \C^2$ and denote by $(u,v)$ the natural coordinates on $\S^1 \times \S^1,$ so that  $\nu:= (e^{iu} , e^{iv}).$ In particular the 
 metric on $\S^1 \times \S^1$ is $du^2-dv^2.$ 
 Hence we have
$$ A_{\nu} = -\left( \begin{array}{cc}  \tau + \si + 2 \si_{uu} & 2 \si_{uv}\\
           -2 \si_{uv} &  \tau -\si -2 \si_{vv} \end{array} \right)^{-1}$$
whose trace is $  \frac{2}{\det A_{\nu}} \left( \tau +\si_{uu} -\si_{vv} \right)$. Hence
 $\varphi$ is marginally trapped if and only if $\tau = \si_{vv} -\si_{uu}.$ 

We now study the induced metric $\varphi^*\<.,.\>$: since 
\begin{eqnarray*} \varphi= \tau  (e^{iu} , e^{iv}) + \si  (e^{iu} , -e^{iv}) +2 (i \si_u e^{iu}, -i\si_v e^{iv})
\end{eqnarray*}
we have
\begin{eqnarray*}\varphi_u&=& [ ((\tau-\si)_u+i(2\si_{uu}+\tau+\si))e^{iu}, ( (\tau-\si)_u-2i\si_{uv})e^{iv} ]\\
\varphi_v&=& [ ((\tau+\si)_v+2i\si_{uv})e^{iu}, ( (\tau+\si)_v+i(-2\si_{vv}+\tau-\si))e^{iv} ].\end{eqnarray*}
By a straightforward calculation  the coefficients of the first fundamental form $\varphi^*\<.,.\>$ are:
\begin{eqnarray*} E &:=& (2\si_{uu}+\tau+\si)^2 - 4 \si_{uv}^2\\
 F&:=& 4\si_{uv}(\si_{uu}-\si_{vv}+2\tau)\\
G&:=&- (2\si_{vv}-\tau+\si)^2 + 4 \si_{uv}^2\end{eqnarray*}
The marginally trapped assumption $\tau=\si_{vv}-\si_{uu}$ implies 
$$E=-G=(2\si_{uu}+\tau+\si)^2 - 4 \si_{uv}^2 =(\si+ \si_{uu}+ \si_{vv})^2 - 4 \si_{uv}^2,$$
and the vanishing of $F$, so that $\varphi$ is weakly conformal  (and conformal whenever $E$ does not vanish).


 It is well known that the induced metric of a surface with isothermic coordinates
is flat if and only if its conformal factor is harmonic. Here, we are dealing with the Lorentzian metric $du^2-dv^2$, whose Laplacian operator is $\pa_{uu}-\pa_{vv}$. Hence the induced metric is flat if and only if
$$ (\pa_{uu}-\pa_{vv})E =  (\pa_{uu}-\pa_{vv})\Big((\si+ \si_{uu}+ \si_{vv})^2 - 4 \si_{uv}^2 \Big).$$

\medskip

\noindent \textbf{Marginally trapped Lagrangian surfaces}

\medskip

\noindent We recall that $J(z_1,z_2)=(iz_1, iz_2)$, so that
$$J \varphi_u=[ - (2\si_{uu} + \tau+\si) + i(\tau-\si)_u ) e^ {iu}, (2\si_{uv} + i(\tau-\si)_u )e^{iv}] .$$
Hence, using the usual formula $\omega = \<J.,.\>$,
\begin{eqnarray*} \omega( \varphi_u, \varphi_v) &=&\<J\varphi_u, \varphi_v\>\\
&=&-(2\si_{uu}+\tau+\si)(\tau+\si)_v + 2\si_{uv}(\tau-f)_u -2\si_{uv}(\tau+\si)_v - (\tau-\si)_u (-2\si_{vv} + \tau-\si)\\
&=&-(\tau+\si)_v (\si+\si_{uu}+\si_{vv}+2\si_{uv})+(\tau-\si)_u(\si+\si_{uu}+\si_{vv}+2\si_{uv}) \\
&=& (\si+\si_{uu}+\si_{vv}+2\si_{uv})(-\si_v-\si_{u}-\si_{vvv}+\si_{uuv}+\si_{uvv}-\si_{uuu}).
\end{eqnarray*} 
The first factor does not vanish except at degenerate points, so  $\varphi$ is Lagrangian with respect to $\omega$ if and only if $\si_v+\si_{u}+\si_{vvv}-\si_{uuv}-\si_{uvv}+\si_{uuu}=0$.

Recalling that the para-complex structure is given by $K(x_1,x_2,x_3,x_4):=(x_3,x_4,x_1,x_2)$, we have
$$ K \varphi_u=[ (\tau-\si)_u- 2i\si_{uv})e^{iv},(\tau-\si)_u+i(2\si_{uu}+ \tau+\si))e^{iu}] $$
and so
\begin{eqnarray*} \omega'( \varphi_u, \varphi_v) &=&\<K\varphi_u, \varphi_v\>\\
&=&\cos(u-v) \left( -4 \si_{uv}^2 - (\tau -\si - 2 \si_{vv})(2 \si_{uu}+\tau + \si) \right) \\
 &=& \cos(u-v) ( - 4 \si_{uv}^2 + (\si + \si_{uu} + \si_{vv})^2\\
&=& \cos(u-v) E.
\end{eqnarray*}
Hence $\varphi$ is Lagrangian with respect to $\omega'$ if and only if the induced metric is totally null, which is incompatible with the marginally trapped assumption.


\subsection{The $\S^{n+2}_{p+1}$ case (proof of Theorem \ref{S^{n+2}_{p+1}})}
Let  ${\varphi}: \M \to \S^{n+2}_{p+1}$ an immersed submanifold of co-dimension two of $\S^{n+2}_{p+1}$. Let $\si$ be a normal, null vector field along $\varphi$ which is  normalized in such way that  $\si \in \S^{p+1} \times \S^q.$ We moreover assume that $\si$ has maximal rank, i.e.\ $\si: \M \to \S^{p+1} \times \S^q$ is an immersed hypersurface.

\begin{lemm}[\cite{GS}] \label{L1}
 There exists a unique pair $(\nu, \tau)$, where $\nu: \M \to  \S^{n+2}_{p+1} $ and $\tau\in C^{2}(\M)$ such that
$$\varphi=\nu + \tau\si. $$
Moreover the map $\nu: \M \to  \S^{n+2}_{p+1}$ is the Gauss map of $\si$, i.e.\ we have $\nu  \in T_\si(\S^{p+1} \times \S^q)$
 and $\<d\si, \nu\>=0$.
\end{lemm}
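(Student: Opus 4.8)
The plan is to mimic the null-projection argument used in the flat and hyperquadric cases of Theorem \ref{zero}, but now running it "in reverse": instead of starting from an immersion $\varphi$ and extracting a hypersurface $\si$, I start from the normalized null normal field $\si \in \S^{p+1}\times\S^q$ and reconstruct $\varphi$ from it. First I would observe that, since $\si$ is null and $N\Sigma$ is Lorentzian, the two null lines in $N\Sigma$ are spanned by $\si$ and a second null normal field; but more useful here is the ambient decomposition. Because $\varphi\in\S^{n+2}_{p+1}\subset\R^{n+3}_{p+2}$ we have $\<\varphi,\varphi\>=1$, and $\si$ is normal to $\varphi$ inside $\S^{n+2}_{p+1}$, hence normal to $\varphi$ inside $\R^{n+3}_{p+2}$ as well (the position vector $\varphi$ is the extra normal direction of $\S^{n+2}_{p+1}$). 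The idea is to follow the null geodesic of $\S^{n+2}_{p+1}$ issuing from $\varphi$ in the direction $\si$; a null geodesic of the hyperquadric through $\varphi$ with null initial velocity $\si$ is the affine null line $t\mapsto \varphi+t\si$ (one checks $\<\varphi+t\si,\varphi+t\si\>=\<\varphi,\varphi\>+2t\<\varphi,\si\>+t^2\<\si,\si\>=1$, using $\<\varphi,\si\>=0=\<\si,\si\>$, so it stays on $\S^{n+2}_{p+1}$). So the reconstruction is: set $\nu:=\varphi-\tau\si$ and determine $\tau$ by a normalization.

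Next I would pin down $\tau$. I want $\nu$ to be the Gauss map of the immersion $\si:\M\to\S^{p+1}\times\S^q$, meaning $\nu\in T_\si(\S^{p+1}\times\S^q)^{\perp}$ inside the relevant model; concretely $\nu$ should be $\S^{n+2}_{p+1}$-valued, orthogonal to $d\si$, and orthogonal to the two spheres' radial directions in the appropriate sense. The natural defining condition is $\<\varphi-\tau\si,\si\>$-type normalization. Since $\<\si,\si\>=0$, orthogonality of $\nu$ to $\si$ is automatic; the right condition is instead to use the other null normal or the "split conjugate" of $\si$: writing $\si=(\si',\si'')\in\S^{p+1}\times\S^q$, its conjugate $\overline{\si}=(\si',-\si'')$ satisfies $\<\si,\overline{\si}\>=\|\si'\|^2+\|\si''\|^2=2$ and $\<\overline{\si},\overline{\si}\>=0$. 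Then $\tau:=\tfrac12\<\varphi,\overline{\si}\>$ is forced by requiring $\<\nu,\overline{\si}\>=0$, i.e. requiring $\nu$ to lie in the span complementary to $\overline{\si}$. Uniqueness of $(\nu,\tau)$ is then immediate: if $\nu+\tau\si=\nu_1+\tau_1\si$ with both $\nu,\nu_1$ in that complementary hyperplane, pair with $\overline{\si}$ to get $\tau=\tau_1$, hence $\nu=\nu_1$. Existence is the explicit formula $\tau=\tfrac12\<\varphi,\overline{\si}\>$, $\nu=\varphi-\tau\si$, with the requisite regularity ($\si$ being the normalized null normal along a $C^4$ immersion, $\tau\in C^2(\M)$).

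Then I would verify the two asserted properties of $\nu$. That $\nu$ is $\S^{n+2}_{p+1}$-valued: compute $\<\nu,\nu\>=\<\varphi,\varphi\>-2\tau\<\varphi,\si\>+\tau^2\<\si,\si\>=1$, using $\<\varphi,\si\>=\<\si,\si\>=0$. That $\nu\in T_\si(\S^{p+1}\times\S^q)$ amounts to $\<\nu,\si\>=0$ (automatic) together with $\nu$ being orthogonal to the two radial directions $\si'$ and $\si''$ of the factor spheres — equivalently $\<\nu,\overline\si\>=0$, which is exactly how $\tau$ was chosen — so $\nu$ is a legitimate point of the "unit normal bundle" fibre, i.e. a candidate Gauss map value. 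Finally, $\<d\si,\nu\>=0$: since $\si$ is the null \emph{normal} field along $\varphi$ we have $\<d\varphi,\si\>=0$, and differentiating the scalar relations $\<\si,\si\>=0$, $\<\varphi,\si\>=0$, $\<\varphi,\overline\si\>=2\tau$ one gets $\<d\si,\si\>=0$ and then $\<d\si,\nu\>=\<d\si,\varphi-\tau\si\>=\<d\si,\varphi\>-\tau\<d\si,\si\>=\<d\si,\varphi\>$; and $\<d\si,\varphi\>=-\<\si,d\varphi\>=0$. This identifies $\nu$ as the Gauss map of $\si$ viewed as a hypersurface of $\S^{p+1}\times\S^q$.

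The main obstacle I anticipate is purely bookkeeping: getting the signs and the precise ambient identification right — which $\R^{n+3}_{p+2}$ the vectors $\varphi,\nu,\si$ live in, how $\S^{p+1}\times\S^q$ sits inside it as (part of) the unit normal bundle of $\S^{n+2}_{p+1}$, and verifying that "$\nu\in T_\si(\S^{p+1}\times\S^q)$" is genuinely equivalent to the orthogonality conditions I use to fix $\tau$. None of the individual computations is hard (they are all bilinear-form manipulations of the kind already done in Lemma \ref{l2} and the $\S^{n+2}_{p+1}$-subsection of Section \ref{s3}); the care is entirely in the setup. This is presumably why the authors cite \cite{GS} for the statement.
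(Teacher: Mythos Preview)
Your proposal is correct and matches the paper's proof almost exactly. The only cosmetic difference is that you package the tangency condition $\nu\in T_\si(\S^{p+1}\times\S^q)$ as $\<\nu,\overline\si\>=0$ (together with the automatic $\<\nu,\si\>=0$), whereas the paper writes it componentwise as $\<\nu',\si'\>_{p+2}=0$ and $\<\nu'',\si''\>_{q+1}=0$; these are equivalent, and both yield the same formula for $\tau$. Your verification of $\<d\si,\nu\>=0$ via $\<d\si,\varphi\>=-\<\si,d\varphi\>=0$ is the same differentiation argument the paper uses, just run in a slightly different order.
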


\begin{proof} 
For an abritrary $\tau \in C^2(\M),$ we have that $\nu:=\varphi - \tau \si \in \S^{n+2}_{p+1}.$  Hence we shall determine $\tau \in C^2(\M)$ by the condition
$\nu \in T_\nu(\S^{p+1}\times\S^q).$ Recalling the decomposition $\R^{n+3} =\R^{p+2} \times \R^{q+1}$, and writing $\nu=(\nu', \nu''), \, \si=(\si',\si'')$ accordingly, this condition amounts to  $\<\nu', \si'\>_{p+2}=0$ and $\<\nu'',\si''\>_{q+1}=0$, where $\<.,.\>_{p+2}$ and $\<.,.\>_{q+1}$ denote the Euclidean inner products of $\R^{p+2}$ and $\R^{q+1}$ respectively. These two equations yield $\tau =\<\nu',\si'\>_{p+1} $ and $\tau=\<\nu'',\si''\>_{q+1}$, which are actually two equivalent requirements since $\<\varphi,\si\>=\<\varphi',\si'\>_{p+2}-\<\varphi'',\si''\>_{q+1}$ vanishes. Therefore $\tau$ is uniquely determined by the condition 
$\nu \in T_\si(\S^{p+1} \times \S^q)=T_{\si'} \S^{p+1} \times \T_{\si''} \S^q .$

It remains to check that $\nu$ is the Gauss map of $\si$. For this purpose we differentiate $\varphi = \nu + \tau \si$ and remember that $\si$ is normal to $\varphi$, so that
$$ 0=\< d\varphi, \si\> = \< d\nu, \si\> + d\tau \<\si, \si\> + \tau \< d\si, \si\> = \<d\nu, \si\>.$$
Hence $\<d\nu, \si\>$ vanishes. Since $0=d(\<\nu, \si\>)=\<d\nu,\si\> + \<\nu, d\si\>$,  we deduce that $\<d\si, \nu\>$ vanishes as well.
\end{proof}

Observe that the Lemma above implies furthermore that the induced metric $g:=\si^* \<.,.\>$ is non-degenerate, since $\si(\M)$  is a hypersurface and admits a unit normal vector field.

\begin{lemm} \label{geo}
Denote by $g:=\si^* \<.,.\>$ the metric induced on $\M$ by $\si$ and $A$ the shape operator associated to $ \nu,$ 
i.e.\ $A (v):=-d\nu(v), \, \forall v \in T\M.$
Then the metric 
$\tilde{g}:=\varphi^\ast \<.,.\>$ induced on  $\M$ by $\varphi$ is given by the formula
$$\tilde{g}=\tau^2 g(.,.)-2 \tau g(A.,.)+  g(A.,A.).$$
In particular, the non-degeneracy assumption on $\tilde{g}$ implies that $\tau$ is not equal to any principal curvature of $\nu.$
Moreover,  the second fundamental form of $\varphi$ with respect to ${ \si}$ is given by
$$h_{ \si}:= \<h(.,.), \si\> = g(A.,.) -\tau  g(.,.).$$
 \end{lemm}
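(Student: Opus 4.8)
The statement to prove is Lemma~\ref{geo}, which computes the first and second fundamental forms of $\varphi = \nu + \tau\si$ in terms of the geometry of the hypersurface $\si$ in $\S^{p+1}\times\S^q$. This is the spherical analogue of Lemma~\ref{geozero} from the flat case, so I expect the proof to follow the same pattern of straightforward differentiation, using the orthogonality relations established in Lemma~\ref{L1}.

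The plan is as follows. First I would differentiate the defining relation $\varphi = \nu + \tau\si$ to get $d\varphi = d\nu + d\tau\,\si + \tau\,d\si = -A + d\tau\,\si + \tau\,d\si$, where I write $A = -d\nu$ for the shape operator of $\si$. The key algebraic inputs, all supplied by Lemma~\ref{L1} and its proof, are: $\<d\si,\nu\> = 0$, $\<d\si,\si\> = 0$ (since $\si$ is $\S^{p+1}\times\S^q$-valued, hence unit-length in $\R^{n+3}_{p+2}$), $\<\si,\nu\>=0$, $\<\si,\si\>=1$, and $\<\nu,\nu\> = 1$ together with $\<d\nu,\nu\>=0$. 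Also $d\nu$ maps into $T\si(\S^{p+1}\times\S^q)$, so $A$ is an endomorphism of $T\M$ and $g(A\cdot,\cdot)$ is symmetric (this is the content of $\nu$ being the Gauss map). Then for tangent vectors $v_1,v_2$,
\begin{eqnarray*}
\tilde g(v_1,v_2) &=& \<d\varphi(v_1),d\varphi(v_2)\>\\
&=& \<-Av_1 + d\tau(v_1)\si + \tau\,d\si(v_1),\ -Av_2 + d\tau(v_2)\si + \tau\,d\si(v_2)\>\\
&=& g(Av_1,Av_2) - \tau\big(\<d\si(v_1),Av_2\> + \<Av_1,d\si(v_2)\>\big) + d\tau(v_1)d\tau(v_2) + \tau^2 g(v_1,v_2),
\end{eqnarray*}
where the cross terms involving $\si$ against $A$ and $d\si$ drop out by the orthogonality relations. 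The remaining issue is the term $\<d\si(v_1),Av_2\>$: since $d\nu$ takes values in $T\si(\S^{p+1}\times\S^q)$ and $d\si(v_2)$ also lies there, $\<d\si(v_1),Av_2\> = -\<d\si(v_1),d\nu(v_2)\> = g(v_1,Av_2)$ after identifying with the $g$-pairing via $d\si$; combined with the symmetry of $g(A\cdot,\cdot)$ this gives $-2\tau g(Av_1,v_2)$, yielding the stated formula $\tilde g = \tau^2 g - 2\tau g(A\cdot,\cdot) + g(A\cdot,A\cdot)$. The non-degeneracy consequence follows because $\tilde g = g((\tau\,Id - A)\cdot,(\tau\,Id - A)\cdot)$ degenerates exactly when $\tau\,Id - A$ is singular, i.e.\ $\tau$ is an eigenvalue (principal curvature) of $A$.

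For the second fundamental form with respect to $\si$, I would compute $h_\si = -\<d\varphi,\,d\si\>$ (the standard formula, since $\si$ is a unit normal field to $\varphi$ along which we measure). Substituting $d\varphi = -A + d\tau\,\si + \tau\,d\si$ and using $\<\si,d\si\>=0$ and $\<d\si,d\si\> = g$, the $d\tau\,\si$ term dies and we are left with $h_\si = \<A,\,d\si\> - \tau\<d\si,d\si\> = g(A\cdot,\cdot) - \tau g(\cdot,\cdot)$, again using the identification $\<d\si(v_1),d\si(v_2)\> = g(v_1,v_2)$ and $\<Av_1,d\si(v_2)\> = g(Av_1,v_2)$.

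The main obstacle, such as it is, is purely bookkeeping: one must be careful that $A = -d\nu$ is being regarded as an endomorphism of $T\M$ via the immersion $\si$ (i.e.\ $d\nu(v)$ is identified with a tangent vector of $\M$ through $d\si$), so that expressions like $g(A\cdot,A\cdot)$ make sense, and that $g(A\cdot,\cdot)$ is genuinely symmetric — this is exactly the assertion in Lemma~\ref{L1} that $\nu$ is the Gauss map of $\si$. Once that identification is fixed, every step is a one-line inner-product expansion, and no hard estimate or delicate argument is needed; this is why the proof can be presented compactly, in parallel with Lemma~\ref{geozero}.
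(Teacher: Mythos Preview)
Your approach is exactly the paper's: differentiate $\varphi=\nu+\tau\si$ and expand $\<d\varphi,d\varphi\>$ and $-\<d\varphi,d\si\>$ using the orthogonality relations from Lemma~\ref{L1}. There is, however, one slip in your bookkeeping. You assert $\<\si,\si\>=1$ (``unit-length in $\R^{n+3}_{p+2}$''), but $\si$ is \emph{null}: writing $\si=(\si',\si'')\in\S^{p+1}\times\S^q$ gives $\<\si,\si\>=|\si'|^2_{p+2}-|\si''|^2_{q+1}=1-1=0$. This is precisely why the stray term $d\tau(v_1)d\tau(v_2)$ in your expansion of $\tilde g$ actually carries a factor $\<\si,\si\>=0$ and disappears --- which reconciles your computation with the stated formula. (The relation $\<d\si,\si\>=0$ still holds, since $\<\si,\si\>$ is constant, just with constant value $0$ rather than $1$.) With that correction the argument is complete and identical to the paper's.
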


\begin{proof}
Taking into account that $d\varphi = d\nu + d\tau \, \si + \tau d\si, $ we have
$$ \tilde{g}=\<d\varphi ,d\varphi\>=\<d\nu, d\nu\> + 2\tau \<d\nu, d\si\> + \tau^2 \< d\si, d\si\>
= g(A.,A.) - 2\tau g(A.,.) + \tau^2 g(.,.)$$
and
$$ h_\si=-\<d\varphi ,d\si\>=-\<d\nu, d\si\> - \tau \< d\si, d\si\>=  g(A.,.) - \tau g(.,.).$$\end{proof}

The proof of Theorem \ref{S^{n+2}_{p+1}} is now straightforward: if $\varphi$ is marginally trapped, we may assume without loss of generality that its mean curvature vector  $\vec{H}$ is collinear to $\si$. By the maximal rank assumption on $\si$  we may  use Lemmas \ref{L1} and \ref{geo}.
Denote by $\tilde{A}_\si$  the shape operator of $\varphi$ with respect to $\si.$ Then, from Lemma \ref{geo} above, we have:
$$ g( \tilde{A}_{\si} ( \tau Id - A).,  (\tau Id -A).) = g(., (\tau Id - A). ).$$
It follows that 
$$ \tilde{A}_{\si}:= (\tau Id - A)^{-1}$$ and
that $\vec{H}$ is collinear to $\nu$ if and only if $A_{\nu}$ is trace-free, i.e.\ $\tau$ is the root of the polynomial 
$P(\tau) = tr ( ( \tau Id - A)^{-1} ).$

\medskip 

\noindent \textbf{The case $(p,q)=(1,1)$ (proof of Corollary \ref{coro3})}

\smallskip

\noindent It is straightforward that if $M$ is a $2 \times 2$ matrix, then $tr (M^{-1})= (\det M)^{-1} tr M$. Hence
$tr ( ( \tau Id - A)^{-1} ) $ vanishes if and only if so does $tr(\tau Id- A)=2 \tau - tr A$. Hence, $\varphi$ is marginally trapped if and only if $\tau = tr A/2:=H$, the (scalar) mean curvature of the immersion $\si.$
This proves Corollary \ref{coro3}.

\section{Further remarks}  \label{s5}
 


\subsection{Interpretation of the result in terms of contact geometry}

The constructions done in the previous sections come from the natural contact structure enjoyed by the spaces of null geodesics of the ambient spaces, and from the fact that the set of null geodesics which are normal to a submanifold of co-dimension two is  Legendrian with respect to this contact structure.

\medskip

The proof of Theorem \ref{zero} is based on the following fact: let ${\cal U}$ be the dense, open subset of null geodesics of $\R^{n+2}_{p+1}$ that cross the horizontal hyperplane $\{ x_{n+2} =0 \}$ (in the Minkowski case $(p,q)=(n,0)$, all null geodesics cross the horizontal hyperplane). Then the correspondence
 $\{ (\si, 0) + t (\nu,1) \, |\, t \in \R\} \mapsto (\si, \nu)$ defines a
contactomorphism between ${\cal U}$ and the unit tangent bundle $T^1 \R^{n+1}_p$. The canonical contact structure $\al$ of the unit tangent of a pseudo-Riemannian manifold $(\M,g)$ is given by the expression $\al=g(d\si, \nu)$,  where $\nu$ is a unit vector tangent to $\M$ at the point $\si.$ Hence, given an immersion  $x  \mapsto ( \si(x),\nu(x))$ of a $n$-dimensional manifold such that $x \mapsto \si(x)$ is an immersion as well (a generic assumption), the Legendre condition $g(d\si_x, \nu(x))=0$ simply means that $\nu$ is the Gauss map of  $\si$, or, equivalently, $\nu$ is a unit vector field normal to the immersed hypersurface  $\si.$

\medskip

The interpretation of the proof of Theorem \ref{one} in terms of contact geometry is as follows: the space of null geodesics of $\R^{n+2}_{p+1}$ may be identified with space of one-jets 
on $\S^{p} \times \S^q$, i.e.\ the space $T (\S^p \times \S^q ) \times \R$:
to the triple $(\nu, V, z) \in T(\S^p \times \S^q) \times  \R, $  we associate the
null line $\{   V + z \overline{\nu} + t  \nu \,  | \, t \in \R\} \subset \R^{n+2}_{p+1}$.
The natural contact structure on the space of one-jets $T\M \times \R$, where $(\M,g)$ is a pseudo-Riemannian manifold,  is given by $\alpha:= \psi - dz$, where $\psi$ is the \em Liouville \em form\footnote{To be more precise, the Liouville form is canonically defined on the cotangent bundle $T^* \M$ of a differentiable manifold $\M$.  If $\M$ is moreover equipped with a pseudo-Riemannian metric (as it is the case of $\S^p \times \S^q$), we may identify  $T^* \M $ and $T\M$ and therefore speak of a Liouville form on $T\M$.}  or \em tautological \em form on $T\M$. 
Moreover,  a  generic Legendrian immersion in  $T \M \times \R$ is  local a section, and takes the form 
$\nu \mapsto (\nu, \nabla \si (\nu), \si (\nu))$, 
where $\si \in C^2(\M)$ and $\nabla$ denotes the gradient of the metric $g.$ It follows, in the case $\M = \S^p \times \S^q$, that a generic Legendrian congruence of null lines of $\R^{n+2}_{p+1}$ takes the form
$$\nu \mapsto \{   \nabla \si (\nu) + \si (\nu) \overline{\nu} + t  \nu \,  | \, t \in \R\},$$
where $\si \in C^2(\S^p \times \S^q)$. The choice of real function $\tau \in C^2(\S^p \times \S^q)$ determines an $n$-dimensional  submanifold
parametrized by $\nu \mapsto \nabla \si (\nu) + \si (\nu) \overline{\nu} + \tau(\nu)  \nu$, one of whose null normal is $\nu.$

\medskip

Finally, the proof of Theorem \ref{S^{n+2}_{p+1}} comes from the fact, proved in \cite{GS}, that the space of null geodesics of $\S^{n+2}_{p+1}$ can be identified with $T^1 (\S^{p} \times \S^{q})$, the unit tangent bundle of $\S^p \times \S^q$, as follows: to the pair $(\nu,\psi) \in T^1 (\S^p \times \S^q),$ we associate the null line
$\{ \psi + t \nu| \, t \in \R\} \subset \S^{n+2}_{p+1}.$ 

\subsection{Relation between Theorems \ref{zero} and \ref{one} in the case $(p,q)=(n,0)$} \label{lor}

In the Lorentzian case $(p,q)=(n,0)$, it is easy to relate the formulas of  Theorems \ref{zero} and \ref{one}. To avoid confusion, all mathematical quantities
refered in Theorem \ref{zero} (resp.\ in Theorem \ref{one}) are written with an index $2$ (resp.\ $3$).
We start writing $\nu_3=(\nu_2,1) \in \S^n \times \S^0 \simeq \S^n \times \{1,-1\},$ so that
 $\overline{\nu}_3= (\nu_2, -1)$. Hence the main formula of Theorem \ref{one} becomes
 $$\varphi= \left( (\tau_3+ \si_3) \nu_2 +2 \nabla \si_3 , \tau_3- \si_3  \right),$$
where $\si_3 \in C^4(\S^n \times \S^0) \simeq C^4(\S^n)$ and $\tau_3$ depends on the second derivatives of $\si_3$.
Introducing
$$\si_2:=2 \si_3 \, \nu_2 + 2 \nabla \si_3 \, \mbox{ and }  \, \tau_2:= \tau_3- \si_, $$
we obtain
$$\varphi= (\si_2 + \tau_2  \nu_2 ,\tau_2),$$  
which is exactly the main formula of Theorem \ref{zero}. 
Observe that $\<d\si_2 , \nu_2\>$ vanishes, i.e.\ $\nu_2$ is normal to the immersion $\si_2$, which is therefore parametrized by its Gauss map. Moreover, 
$\<\si_2, \nu_2\>=2\si_3$, i.e.\ $2 \si_3$ is   the support function of the immersion $\si_2.$


\end{document}